\newtheorem{theorem}{Theorem}
\newtheorem{definition}[theorem]{Definition}
\newtheorem{lemma}[theorem]{Lemma}
\newtheorem{proposition}[theorem]{Proposition}
\newtheorem{remark}[theorem]{Remark}
\begin{document}
\title[ ]{A $q$-Umbral Approach to $q$-Appell Polynomials}
\author{Marzieh Eini Keleshteri and Naz{i}m I. Mahmudov}
\address{Mathematics Department, Eastern Mediterranean University, Famagusta, North
Cyprus, via Mersin 10, Turkey}
\email{marzieh.eini@emu.edu.tr, nazim.mahmudov@emu.edu.tr}
\subjclass{Special Functions}
\keywords{Umbral, $q$-polynomials, Appell, Euler, Bernoulli, Genocchi}

\begin{abstract}
In this paper we aim to specify some characteristics of the so called family of $q$-Appell Polynomials by using $q$-Umbral calculus. Next in our study, we focus on $q$-Genocchi numbers and polynomials as a famous member of this family. To do this, firstly we show that any arbitrary polynomial can be written based on a linear combination of $q$-Genocchi polynomials. Finally, we approach to the point that similar properties can be found for the other members of the class of $q$-Appell polynomials.

\end{abstract}
\maketitle

\section{Introduction and preliminaries}

\subsection{$q$-Calculus}
Throughout this work we consider the notation $\mathbb{N}$ as the set of natural numbers, $\mathbb{N}_{0}$ as the set of positive integers and
$\mathbb{C}$ as the set of complex numbers. We refer the readers to
\cite{Andrews} for all the following $q$-standard notations. The $q$-shifted
factorial is defined as%
\[
(a;q)_{0}=1,\quad(a;q)_{n}=\prod\limits_{j=0}^{n-1}(1-q^{j}a),\quad
n\in\mathbb{N},(a;q)_{\infty}=\prod\limits_{j=0}^{\infty}(1-q^{j}%
a),\quad|q|<1,a\in\mathbb{C}.
\]
The $q$-numbers and $q$-factorial are defined by%
\[
\lbrack a]_{q}=\frac{1-q^{a}}{1-q}\quad(q\neq1);\quad\lbrack0]!=1;\quad\lbrack
n]_{q}!=[1]_{q}[2]_{q}\ldots\lbrack n]_{q},\quad\lbrack2n]_{q}!!=[2n]_{q}%
[2n-2]_{q...}[2]_{q},\text{ \ \ }n\in\mathbb{N},a\in\mathbb{C},
\]
respectively. The $q$-polynomial coefficient is defined by%
\[
\left[
\begin{array}
[c]{c}%
n\\
k
\end{array}
\right]  _{q}=\dfrac{[n]_{q}!}{[k]_{q}![n-k]_{q}!}.
\]
The $q$-analogue of the function $(x+y)^{n}$ is defined by%
\begin{equation}
(x+y)_{q}^{n}:=\sum\limits_{k=0}^{n}\left[
\begin{array}
[c]{c}%
n\\
k
\end{array}
\right]  _{q}q^{1/2k(k-1)}x^{n-k}y^{k},\quad n\in\mathbb{N}_{0}. \label{00}%
\end{equation}
The $q$-binomial formula is known as%
\begin{equation}
(1-a)_{q}^{n}=\prod\limits_{j=0}^{n-1}(1-q^{j}a)=\sum\limits_{k=0}^{n}\left[
\begin{array}
[c]{c}%
n\\
k
\end{array}
\right]  _{q}q^{1/2k(k-1)}(-1)^{k}a^{k}.\label{Binomial}
\end{equation}
In the standard approach to the $q$-calculus, one of the $q$-analogues of the exponential function is defined as%
\begin{equation}
e_{q}\left(  z\right) =\sum_{n=0}^{\infty}\frac{z^{n}}{\left[  n\right]
_{q}!}=\prod_{k=0}^{\infty}\frac{1}{\left(  1-\left(  1-q\right)
q^{k}z\right)  },\ \ \ 0<\left\vert q\right\vert <1,\ \left\vert z\right\vert
<\frac{1}{\left\vert 1-q\right\vert }, z\in\mathbb{C} \label{exp}.
\end{equation}

The $q$-derivative of a function $f$ at point $0\neq z\in\mathbb{C}$, is
defined as%
\begin{equation}
D_{q}f\left(  z\right)  :=\frac{f\left(  qz\right)  -f\left(  z\right)
}{qz-z},\ \ \ \ 0<\left\vert q\right\vert <1. \label{q-der}%
\end{equation}
From this we easily see that
\[
D_{q}e_{q}\left(  z\right)  =e_{q}\left(  z\right).
\]
Moreover, Jackson definite integral of an arbitrary function $f(x)$ is defined as
\cite{Kac}%

\begin{equation}
\int_{0}^{x} f(x)d_{q}x=(1-q)\sum\limits_{n=0}^{\infty}xq^{j}f(xq^{j}),\text{
\ \ }0<q<1. \label{int}%
\end{equation}
Noting to the definitions of $q$-derivative and $q$-integral of a function $f(x)$ in (\ref{q-der}) and (\ref{int}), it is clear that
\begin{equation}
D_{q}\int_{0}^{x} f(x)d_{q}x=f(x), \quad \int_{a}^{b} f(x)d_{q}x=\int_{0}^{b} f(x)d_{q}x-\int_{0}^{a} f(x)d_{q}x.
\end{equation}
According to Carlitz's extension of the classical Bernoulli and Euler polynomials, \cite{calitz1}, \cite{calitz2}, \cite{calitz3}, $q$-Bernoulli and $q$-Euler polynomials are defined by means of the following generating functions
\begin{equation}\label{Bernoulli}
\frac{t}{e_{q}\left(  t\right)-1}e_{q}\left(  tx\right)=\sum_{n=0}^{\infty}B_{n,q}(x)\frac{z^{n}}{\left[  n\right]
_{q}!},\end{equation}
\begin{equation}\label{Euler}
\frac{2}{e_{q}\left(  t\right)+1}e_{q}\left(  tx\right)=\sum_{n=0}^{\infty}E_{n,q}(x)\frac{z^{n}}{\left[  n\right]
_{q}!},\\
\end{equation}
respectively.
In a similar way, according to Kim, $q$-Genocchi polynomials can be defined by means of the following generating function, \cite{Kim}
\begin{equation}\label{Genocchi}
\frac{2t}{e_{q}\left(  t\right)+1}e_{q}\left(  tx\right)=\sum_{n=0}^{\infty}G_{n,q}(x)\frac{z^{n}}{\left[  n\right]
_{q}!}.\\
\end{equation}
For $x=0,\quad B_{n,q}(0)=B_{n,q},\quad E_{n,q}(0)=E_{n,q},$ and $ G_{n,q}(0)=G_{n,q}$, are called the $n$-th $q$-Bernoulli, $q$-Euler, and $q$-Genocchi numbers, respectively. \\
The research on the above mentioned polynomials is vast. The interested readers are referred to \cite{Araci}-\cite{Kurt2} to see various extensions and relations regarding these numbers and polynomials.\\
The class of Appell polynomials for the first time attracted Appell's note in 1880,
\cite{Appell}. In his studies, Appell, characterized this family of polynomials completely. Later, the research done by Throne \cite{Throne}, Sheffer \cite{Amer}, and Varma \cite{Varma} from different points of views, developed the aforementioned class of polynomials. Sheffer, also, showed that how the properties of Appell polynomials hold well for his generalization. In 1954, Sharma and Chak, for the first time, introduced a q-analogue for the family of $q$-Appell polynomials and called this sequence of polynomials as $q$-Harmonic, \cite{Shar}. In the light of their works, Al-Salam, in 1967, reintroduced the family of $q$-Appell polynomials $\{A_{n,q}(x)\}_{n=0}^{\infty}$, and studied some of its properties, \cite{Alsalam}. According to his definition, the n-degree polynomials $A_{n,q}(x)$, are called $q$-Appell provided that any $A_{n,q}(x)$ holds the following $q$-differential equation%

\begin{equation}
D_{q,x}(A_{n,q}(x))=[n]_{q}A_{n-1,q}(x),\text{ \ \ for }n=0,1,2,...\text{ .}
\label{1}%
\end{equation}
This is equivalent to define this family of polynomials by means of the following generating function
$A_{q}(t)$, as follows%

\begin{equation}
A_{q}(x,t):=A_{q}(t)e_{q}(tx)=\sum_{n=0}^{\infty}A_{n,q}(x)\frac{t^{n}%
}{\left[  n\right]  _{q}!},\text{ \ \ }0<q<1, \label{2}%
\end{equation}

where
\begin{equation}
A_{q}(t):=\sum_{n=0}^{\infty}A_{n,q}\frac{t^{n}}{\left[  n\right]  _{q}%
!},\ \ A_{q}(t)\neq0, \label{3}%
\end{equation}

is an analytic function at $t=0$, and $A_{n,q}(x):=A_{n,q}(0).$ The formal power series $A_{q}(t)$, in the above definition, is called the determining function of the class of $q$-Appell polynomials $\{A_{n,q}(x)\}$.\\
In his researches, algebraically, Al-Salam showed that the class of all $q$-Appell polynomials is a maximal commutative subgroup of the group of all polynomial sets.
Later, in 1982, Srivastava's studies on the family of $q$-Appell polynomials led to more characterization and clarification of these type of polynomials, \cite{Sri}. During the past few decades, the class of $q$-Appell polynomials has been studied from different aspects, \cite{Kishan}, \cite{Ernst}, and using different thechniques, \cite{Lou}. Recently, the $q$-difference equations as well as recurrence relations satisfied by sequence of $q$-Appell polynomials are derived by Mahmudov, \cite{Mah4}.
\subsection{$q$-Umbral Calculus}
In 1978, Roman and Rota viewed the classical umbral calculus from a new perspective and proposed an interesting approach based on a simple but innovative indication for the effect of linear functionals on polynomials, which Roman later called it the modern classical umbral calculus, \cite{RomanRota}, \cite{Roman}. Using this new umbral calculus, they defined the sequence of Sheffer polynomials whose their characteristics proved that this new proposed family of polynomials is equivalent to the family of polynomials of type zero which was previously introduced by Sheffer, \cite{IM}. Roman, also, proposed a similar umbral approach under the area of nonclassical umbral calculus which is called $q$-umbral calculus, \cite{Roman}, \cite{Roman1}, \cite{Roman2}. Inspired by his work, in the following, we recast the results of $q$-umbral calculus for $q$-Appell polynomials.\\
Let $\mathbb{C}$ be the field of complex numbers and  $\mathcal{F}$ set of all formal power $q$-series in the variable t over $\mathbb{C}$. In other words, $f(t)$ is an element of $\mathcal{F}$ if
\begin{equation}
f(t)=\sum\limits_{k=0}^{\infty}\dfrac{a_k}{[k]_q!}t^k, \label{fs}
\end{equation}
where $a_k$ is in $\mathbb{C}$.\\
Let $\mathcal{P}$ be the algebra of all polynomials in variable $x$ over $\mathbb{C}$. Let $\mathcal{P}^*$ be the vector space of all linear functionals on $\mathcal{P}$. The action of a linear functional $L$ on an arbitrary polynomial $p(x)$ is denoted by $ \langle L | p(x)\rangle$. We remind that the vector space addition and scalar multiplication operations on $\mathcal{P}^*$ are defined by $\langle L+M | p(x)\rangle=\langle L | p(x)\rangle+\langle M | p(x)\rangle,$ and $\langle cL | p(x)\rangle=c\langle L | p(x)\rangle,$ for any constant $c \in \mathbb{C}.$\\
The formal power $q$-series in (\ref{fs}) defines the following functional on $\mathcal{P}$
\begin{equation}
\langle f(x)| x^n\rangle= a_n, \label{action}
\end{equation}
for all $n\geq 0.$\\
Particularly, according to (\ref{fs}) and (\ref{action}) we have
\begin{equation}
\langle t^k| x^n\rangle=[n]_q!\delta_{n,k}\quad n,k\geq0, \label{particular}
\end{equation}
where $\delta_{n,k}$ is the Kronecker's symbol.\\
Assume that $f_L(t)=\sum\limits_{k=0}^{\infty}\dfrac{\langle L| x^k\rangle}{[k]_q!}t^k$. Since $\langle f_L(t)| x^n\rangle=\langle L| x^k\rangle$, so $f_L(t)=L$. Hence, it is clear that the map $L\mapsto f_L(t)$ is a vector space isomorphism from  $\mathcal{P}^*$ onto $\mathcal{F}$. Therefore, $\mathcal{F}$ not only can be considered as the algebra of all formal power $q$-series in variable $t$, but also it is the vector space of all linear functionals on $\mathcal{P}$. This follows the fact that each member of $\mathcal{F}$ can be assumed as both a formal power $q$series and a linear functional. $\mathcal{F}$ is called the $q$-umbral algebra and studying its properties is called $q$-umbral calculus.\\
\begin{remark}\label{r1}
For the $q$-exponential function $e_{q}\left(  t\right)$, defined in (\ref{exp}), it can be  easily observed that $\langle e_{q}\left(  yt\right)| x^n\rangle=y^n$ and consequently $$\langle e_{q}\left(  yt\right)| p(x)\rangle=p(y),$$ and $$\langle e_{q}\left(  yt\right)\pm1| p(x)\rangle=p(y)\pm p(0).$$
\end{remark}
\begin{remark}
For $f(t)$ in $\mathcal{F}$ we have
\begin{equation}
f(t)=\sum\limits_{k=0}^{\infty}\frac{\langle f(t)| x^k\rangle}{[k]_q!}t^k,\label{expansionf}
\end{equation}
and for all polynomials $p(x)$ in $\mathcal{P}$ we have
\begin{equation}
p(x)=\sum\limits_{k=0}^{\infty}\frac{\langle t^k| p(x)\rangle}{[k]_q!}x^k.\label{expansionp}
\end{equation}
\end{remark}
\begin{proposition}
For $f(t)$ and $g(t)\in \mathcal{F}$ we have
$$\langle f(t)g(t)| p(x)\rangle=\langle f(t)| g(t)p(x)\rangle.$$
\end{proposition}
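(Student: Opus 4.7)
The plan is to reduce the identity to the basis case $f(t)=t^a$, $g(t)=t^b$, $p(x)=x^n$ by bilinearity in $(f,p)$ and linearity in $g$, and then verify the monomial instance by direct appeal to (\ref{particular}). Before that, the operator action of $g(t)$ on $\mathcal{P}$ must be fixed: the only convention compatible with (\ref{particular}) is $t^k\cdot x^n=\tfrac{[n]_q!}{[n-k]_q!}\,x^{n-k}$ for $k\le n$ and $0$ otherwise, i.e.\ $t$ acts as the $q$-derivative $D_q$ of (\ref{q-der}). Under this convention $g(t)p(x)$ is automatically a finite sum, because only the terms with $b\le\deg p$ in $g(t)$ contribute, so no issue of convergence arises.

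On the basis level, the left-hand side equals $\langle t^{a+b}\mid x^n\rangle=[n]_q!\,\delta_{n,a+b}$ directly from (\ref{particular}). For the right-hand side, $t^b x^n$ vanishes unless $n\ge b$, in which case it equals $\tfrac{[n]_q!}{[n-b]_q!}\,x^{n-b}$; pairing this against $t^a$ and applying (\ref{particular}) once more gives $\tfrac{[n]_q!}{[n-b]_q!}\,[n-b]_q!\,\delta_{n-b,a}=[n]_q!\,\delta_{n,a+b}$. The two sides therefore coincide on the monomial basis.

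For the extension, write $f(t)=\sum_a\tfrac{\alpha_a}{[a]_q!}t^a$, $g(t)=\sum_b\tfrac{\beta_b}{[b]_q!}t^b$, and $p(x)=\sum_n\gamma_n x^n$ (a finite sum, since $p$ is a polynomial). Expanding both sides of the proposition linearly in $\alpha_a$, $\beta_b$, and $\gamma_n$ yields, respectively, the triple sums $\sum_{a,b,n}\tfrac{\alpha_a\beta_b\gamma_n}{[a]_q![b]_q!}\langle t^a t^b\mid x^n\rangle$ and $\sum_{a,b,n}\tfrac{\alpha_a\beta_b\gamma_n}{[a]_q![b]_q!}\langle t^a\mid t^b x^n\rangle$, which are equal term by term by the monomial identity just proved. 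The only point really requiring care is pinning down the operator action of $t$ on $\mathcal{P}$ from (\ref{particular}); once that is in place, the remainder is routine bookkeeping of $q$-factorials and Kronecker deltas, with no essential obstacle.
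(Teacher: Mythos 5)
Your proof is correct. Note that the paper states this proposition without any proof of its own (it is background material taken over from Roman's $q$-umbral calculus), so there is no internal argument to compare against; your verification on the monomial basis $f(t)=t^a$, $g(t)=t^b$, $p(x)=x^n$ via (\ref{particular}), followed by extension by linearity (all sums being effectively finite because $p$ is a polynomial, on the left side because the coefficients of $f(t)g(t)$ are finite convolutions and only those of index $\le\deg p$ are ever paired against $p$), is the standard route and it goes through. Two small remarks. First, the paper's displayed definition of the operator action contains a typo, $t^kx^n=\frac{[n]_q!}{[k]_q!}x^{n-k}$; the convention you adopt, $t^kx^n=\frac{[n]_q!}{[n-k]_q!}x^{n-k}$, is the one consistent with (\ref{operator}) and with (\ref{derpol}) (i.e.\ $t$ acting as $D_q$), so you have silently corrected the paper rather than contradicted it. Second, your claim that this is ``the only convention compatible with (\ref{particular})'' is a slight overstatement: (\ref{particular}) concerns the functional pairing and by itself only constrains the constant term of $t^bx^n$; what actually pins down the operator action is (\ref{operator})/(\ref{derpol}). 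This looseness is cosmetic and does not affect the validity of the argument.
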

\begin{proposition}
For $f(t)$ and $g(t)\in \mathcal{F}$ we have
$$\langle f(t)g(t)| x^n\rangle=\sum\limits_{k=0}^{\infty}
\left[
\begin{array}
[c]{c}%
n\\
k
\end{array}
\right]  _{q}\langle f(t)| x^k\rangle\langle g(t)| x^{n-k}\rangle.$$
\end{proposition}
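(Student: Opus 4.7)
The plan is to reduce the identity to an essentially Cauchy-product computation, using only the dictionary (\ref{fs})--(\ref{action}) between formal power $q$-series and linear functionals, together with (\ref{particular}). Since both sides of the claimed identity are $\mathbb{C}$-bilinear in $f$ and $g$, it suffices in spirit to compare coefficients of $t^n$ after multiplying two generic series, but I would simply work with the series directly.

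First I would write $f(t)=\sum_{i\ge 0}\frac{a_i}{[i]_q!}t^i$ and $g(t)=\sum_{j\ge 0}\frac{b_j}{[j]_q!}t^j$, where by (\ref{action}) the coefficients are $a_i=\langle f(t)\mid x^i\rangle$ and $b_j=\langle g(t)\mid x^j\rangle$. Then I would form the formal product
\[
f(t)g(t)=\sum_{n\ge 0}t^n\sum_{k=0}^{n}\frac{a_k\,b_{n-k}}{[k]_q!\,[n-k]_q!}
=\sum_{n\ge 0}\frac{1}{[n]_q!}\left(\sum_{k=0}^{n}\left[\!\begin{array}{c}n\\ k\end{array}\!\right]_q a_k\,b_{n-k}\right)t^n,
\]
so that $f(t)g(t)$ is exhibited as an element of $\mathcal{F}$ in the canonical form (\ref{fs}).

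Next I would apply (\ref{action}) (equivalently, (\ref{particular}) term by term) to this canonical form: the action of $f(t)g(t)$ on $x^n$ picks out $[n]_q!$ times the coefficient of $t^n$, giving
\[
\langle f(t)g(t)\mid x^n\rangle=\sum_{k=0}^{n}\left[\!\begin{array}{c}n\\ k\end{array}\!\right]_q a_k\,b_{n-k}.
\]
Substituting back $a_k=\langle f(t)\mid x^k\rangle$ and $b_{n-k}=\langle g(t)\mid x^{n-k}\rangle$ yields the asserted formula (the upper limit being effectively $n$, since $\langle g(t)\mid x^{n-k}\rangle$ is interpreted as $0$ for $k>n$, or equivalently the summand vanishes outside $0\le k\le n$).

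There is no real obstacle here: the content is a Cauchy product with the $q$-factorial normalization, and the only thing to be careful about is keeping track of the $[n]_q!$ that converts ``coefficient of $t^n$'' into ``action on $x^n$.'' One minor bookkeeping remark I would make explicit is that the extension of the sum to $\infty$ in the statement is harmless, so the identity may equivalently be written as a finite sum $\sum_{k=0}^{n}$, which is how it is naturally produced by the argument above.
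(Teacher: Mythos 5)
Your proof is correct. The paper itself states this proposition without proof (it is recast from Roman's $q$-umbral calculus), and your Cauchy-product computation --- writing $f(t)=\sum_i \frac{\langle f(t)\mid x^i\rangle}{[i]_q!}t^i$, $g(t)=\sum_j \frac{\langle g(t)\mid x^j\rangle}{[j]_q!}t^j$, multiplying, and reading off the coefficient of $t^n$ with the $[n]_q!$ normalization from (\ref{action}) --- is exactly the standard argument; your remark that the upper limit $\infty$ in the statement is effectively $n$ is also the right reading. An equally short alternative, closer to the surrounding material, is to combine the preceding proposition $\langle f(t)g(t)\mid x^n\rangle=\langle f(t)\mid g(t)x^n\rangle$ with the operator formula (\ref{operator}) for $g(t)x^n$, which yields the same sum; both routes are one-line computations, so nothing is gained or lost either way.
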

\begin{proposition}\label{P5}
For $f_1(t), f_2(t), \ldots, f_n(t) \in \mathcal{F}$ we have
\begin{flalign}
\langle f(t)_1f_2(t)\ldots f_k(t)| x^n\rangle& = & \\& & \nonumber \sum\limits_{i_1+i_2+\ldots+i_k=n}
\left[
\begin{array}
[c]{c}%
n\\
i_1,i_2,\ldots,i_k
\end{array}
\right]  _{q}\langle f_1(t)| x^i_1\rangle\langle f_2(t)| x^i_2\rangle\ldots\langle f_k(t)| x^i_k\rangle,
\end{flalign}
where $\left[
\begin{array}
[c]{c}%
n\\
i_1,i_2,\ldots,i_k
\end{array}
\right]  _{q}=\frac{[n]_q!}{[i_1]_q![i_2]_q!\ldots[i_k]_q!}.$
\end{proposition}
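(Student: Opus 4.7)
The natural approach is induction on the number of factors $k$. The base case $k=2$ is exactly the preceding proposition, so nothing is to be shown there. For the inductive step, the plan is to use associativity of the product in $\mathcal{F}$, writing
\[
f_1(t)f_2(t)\cdots f_{k+1}(t) = \bigl(f_1(t)f_2(t)\cdots f_k(t)\bigr)\cdot f_{k+1}(t),
\]
and then applying the two-factor formula (the previous proposition) to the outermost grouping. This produces
\[
\langle f_1(t)\cdots f_{k+1}(t)\mid x^n\rangle
= \sum_{j=0}^{n}\left[\begin{array}{c} n\\ j\end{array}\right]_{q}
\langle f_1(t)\cdots f_k(t)\mid x^{j}\rangle\,\langle f_{k+1}(t)\mid x^{n-j}\rangle .
\]

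Next I would apply the inductive hypothesis to expand $\langle f_1(t)\cdots f_k(t)\mid x^{j}\rangle$ as a sum over compositions $i_1+\cdots+i_k = j$ of $q$-multinomial terms. Setting $i_{k+1}=n-j$ turns the outer index $j$ into a free index $i_{k+1}$, and the double sum collapses into a single sum over all compositions $i_1+i_2+\cdots+i_{k+1}=n$. What remains is to verify the combinatorial identity for the coefficients:
\[
\left[\begin{array}{c} n\\ i_{k+1}\end{array}\right]_{q}
\left[\begin{array}{c} n-i_{k+1}\\ i_1,i_2,\ldots,i_k\end{array}\right]_{q}
= \left[\begin{array}{c} n\\ i_1,i_2,\ldots,i_{k+1}\end{array}\right]_{q},
\]
which follows immediately by cancelling $[n-i_{k+1}]_q!$ in the product of the two factors on the left, using only the definition of the $q$-multinomial coefficient given in the statement.

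There is no real obstacle here: once the two-factor identity is in hand, the induction is essentially bookkeeping. The only point to watch is that the outer sum over $j$ in the two-factor step really does combine with the inductive-hypothesis sum to give a sum over all ordered compositions of $n$ into $k+1$ nonnegative parts (rather than, say, double-counting or missing boundary terms); this is transparent because $i_{k+1}$ ranges over $\{0,1,\ldots,n\}$ and, for each fixed $i_{k+1}$, the inductive hypothesis exhausts all compositions $i_1+\cdots+i_k = n-i_{k+1}$. One short remark would suffice for this, after which the identity in the displayed coefficient equation closes the proof.
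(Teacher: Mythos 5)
Your induction is correct and complete: the base case is the two-factor proposition stated immediately before, the inductive step is the associativity regrouping plus the $q$-Vandermonde-type coefficient identity $\left[\begin{array}{c} n\\ i_{k+1}\end{array}\right]_{q}\left[\begin{array}{c} n-i_{k+1}\\ i_1,\ldots,i_k\end{array}\right]_{q}=\left[\begin{array}{c} n\\ i_1,\ldots,i_{k+1}\end{array}\right]_{q}$, which indeed follows by cancelling $[n-i_{k+1}]_q!$, and the reindexing of the double sum over $j$ and compositions of $j$ into a single sum over compositions of $n$ is handled correctly. The paper itself states Proposition \ref{P5} without proof, so there is nothing to compare against; your argument is exactly the standard iteration of the two-factor formula that the authors implicitly have in mind.
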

We use the notation $t^k$ for the $k$-th $q$-derivative operator, $D_{q}^{k}$, on $\mathcal{P}$ as follows
\begin{equation}
t^kx^n=\Bigg \{\begin{array}{cc}
                      \frac{[n]_q!}{[k]_q!}x^{n-k}, & k\leq n ,\\
           0, & k>n. \\
         \end{array}
\end{equation}
Consequently, using the notation above, each arbitrary function in the form of (\ref{fs}) can be considered as a linear operator on $\mathcal{P}$ defined by
\begin{equation}
f(t)x^n=\sum\limits_{k=0}^{\infty}\left[
\begin{array}
[c]{c}%
n\\
k
\end{array}
\right]  _{q}a_kx^{n-k}.
\label{operator}\end{equation}
Now, consider an arbitrary polynomials $p(x) \in \mathcal{P}$. Then, according to the relation (\ref{expansionp}) for its $k$-th $q$-derivative we have
\begin{equation}
D_q^k p(x)={p}^{(k)}(x)=\sum\limits_{j=k}^{\infty}\frac{\langle t^j| p(x)\rangle}{[j]_q!}[j]_q[j-1]_q\ldots [j-k+1]_q x^{j-k}.
\end{equation}
As the result of the fact above we obtain
\begin{equation}
t^k p(x)=D_q^k p(x)={p}^{(k)}(x),\label{derpol}
\end{equation}
and, also,
\begin{equation}
{p}^{(k)}(0)=\langle t^k| p(x)\rangle=\langle 1| {p}^{(k)}(x)\rangle.\label{t0}
\end{equation}

The immediate conclusion of the relations (\ref{fs}), (\ref{action}) and (\ref{operator}) is that each member of $\mathcal{F}$ plays three roles in the $q$-umbral calculus; a formal power $q$-series, a linear functional and a linear operator.\\
The order of a non-zero power $q$-series $f(t)$ in (\ref{fs}) is denoted by $\textsl{O}(f(t))$ and is defined as the smallest integer $k$ for which the coefficient of $t^k$ is non-zero, that is $a_k\neq0$. A $q$-series $f(t)$ with $\textsl{O}(f(t))=0$ is called invertible and in case that $\textsl{O}(f(t))=1$ it is called a delta $q$-series.
\begin{theorem}\label{thsheffer}
Let $f(t)$ be a delta $q$-series and $g(t)$ be an invertible series. Then there exists a unique sequence $S_{n,q}(x)$ of $q$-polynomials satisfying the following conditions
$$
\langle g(t)f(t)^k|S_{n,q}(x)\rangle=[n]_q!\delta_{n,k},\label{delta}
$$
for all $n,k\geq 0.$
\end{theorem}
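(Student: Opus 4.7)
The plan is to reduce the defining conditions to a triangular infinite system of linear equations in the (finitely many) coefficients of $S_{n,q}(x)$, and then solve by back--substitution. This gives both existence and uniqueness simultaneously.

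First I would record the structural fact that drives everything: since $g(t)$ is invertible we have $\textsl{O}(g(t))=0$, and since $f(t)$ is a delta $q$-series we have $\textsl{O}(f(t))=1$, hence $\textsl{O}(g(t)f(t)^{k})=k$. Writing
\begin{equation*}
g(t)f(t)^{k}=\sum_{m\geq k}\frac{b_{k,m}}{[m]_{q}!}\,t^{m},\qquad b_{k,k}\neq 0,
\end{equation*}
the definition (\ref{action}) together with (\ref{particular}) gives $\langle g(t)f(t)^{k}\mid x^{j}\rangle=b_{k,j}$, which vanishes for $j<k$. Consequently, for any polynomial of degree at most $n$ and any index $k>n$, the pairing with $g(t)f(t)^{k}$ is automatically zero, so the infinite set of required identities collapses to the finite list $k=0,1,\dots,n$.

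Next I would posit $S_{n,q}(x)=\sum_{j=0}^{n}s_{n,j}\,x^{j}$ with $n+1$ unknown coefficients, and substitute into the requirement $\langle g(t)f(t)^{k}\mid S_{n,q}(x)\rangle=[n]_{q}!\,\delta_{n,k}$. By linearity this becomes
\begin{equation*}
\sum_{j=k}^{n}b_{k,j}\,s_{n,j}=[n]_{q}!\,\delta_{n,k}\qquad(k=0,1,\dots,n),
\end{equation*}
which is an upper--triangular $(n+1)\times(n+1)$ linear system whose diagonal entries $b_{k,k}$ are all nonzero. Starting with $k=n$ we get $s_{n,n}=[n]_{q}!/b_{n,n}\neq 0$; then $k=n-1$ forces $s_{n,n-1}=-s_{n,n}b_{n-1,n}/b_{n-1,n-1}$, and so on down to $k=0$. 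This back--substitution yields the unique solution $(s_{n,0},\dots,s_{n,n})$, in particular showing that $S_{n,q}(x)$ has degree exactly $n$.

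The only step that requires care is verifying that $\textsl{O}(g(t)f(t)^{k})=k$, but this is an elementary consequence of the fact that the order of a product equals the sum of the orders for formal power $q$-series (the leading coefficient of $g(t)f(t)^{k}$ is the product of the constant term of $g(t)$ and the $k$-th power of the $t$-coefficient of $f(t)$, both nonzero by the invertibility and delta hypotheses). With this observation in hand, the triangularity of the coefficient matrix, together with its nonvanishing diagonal, delivers existence and uniqueness in a single stroke, completing the proof.
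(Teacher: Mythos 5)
Your argument is correct. Note that the paper itself states this theorem without proof (it is imported from Roman's $q$-umbral calculus), so there is no in-paper argument to compare against; judged on its own, your reduction to a triangular linear system is exactly the standard proof of this result. The facts you rely on are sound: over $\mathbb{C}$ the order of a product of formal power $q$-series is the sum of the orders, so $g(t)f(t)^{k}$ has order $k$ with nonzero leading coefficient $b_{k,k}=[k]_{q}!\,g_{0}f_{1}^{k}$, and by the definition in (\ref{action}) the pairing $\langle g(t)f(t)^{k}\mid x^{j}\rangle=b_{k,j}$ vanishes for $j<k$; hence the conditions with $k\le n$ form an upper-triangular system with nonvanishing diagonal, while those with $k>n$ hold automatically for any polynomial of degree at most $n$. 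The one point worth tightening is uniqueness: you posit from the outset that $S_{n,q}(x)$ has degree at most $n$, so your back-substitution only yields uniqueness within that space. To get uniqueness among all polynomials, add one line: if a solution had degree $d>n$, pairing it against $g(t)f(t)^{d}$ would give $b_{d,d}s_{n,d}=[n]_{q}!\,\delta_{n,d}=0$ with $b_{d,d}\neq0$, forcing $s_{n,d}=0$, a contradiction; hence every solution has degree at most $n$ (indeed exactly $n$, since your computation gives $s_{n,n}\neq0$), and your triangular system then settles existence and uniqueness completely.
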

\begin{definition}
In Theorem (\ref{thsheffer}), $\{S_{n,q}(x)\}_{n=0}^{\infty}$ is called the $q$-Sheffer sequence for the pair $(g(t),f(t)).$ Moreover, the $q$-Sheffer sequences for $(g(t),t)$ is the $q$-Appell sequence for $g(t).$
\end{definition}
\begin{theorem} Let $A_{n,q}(x)$ be $q$-Appell for $g(t)$. Then\\ \label{thappellexpansion}
\begin{itemize}
\item[a)] (The Expansion Theorem) for any $h(t)$ in $\mathcal{F}$
$$
h(t)=\sum\limits_{k=0}^{\infty}\frac{\langle h(t)| A_{k,q}(x)\rangle}{[k]_q!} g(t)t^k,
$$
\item[b)] (The Polynomial Expansion Theorem) for any $p(x)$ in $\mathcal{P}$ we have
$$
p(x)=\sum\limits_{k=0}^{\infty}\frac{\langle g(t)t^k| p(x)\rangle}{[k]_q! }A_{k,q}(x).
$$
\end{itemize}
\end{theorem}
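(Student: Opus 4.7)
The plan is to exploit the biorthogonality established in Theorem~\ref{thsheffer}, which in the $q$-Appell setting (where the delta series is $f(t)=t$) reduces to $\langle g(t)t^k \mid A_{n,q}(x)\rangle = [n]_q!\,\delta_{n,k}$. The two parts of the statement are formally dual: part (a) expands a formal $q$-series in the ``basis'' $\{g(t)t^k\}_{k\geq 0}$ of $\mathcal{F}$, while part (b) expands a polynomial in the ``basis'' $\{A_{k,q}(x)\}_{k\geq 0}$ of $\mathcal{P}$; in both cases biorthogonality will read off the coefficients instantly.

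For part (a) I would first verify that $\{g(t)t^k\}_{k\geq 0}$ is really a basis for $\mathcal{F}$. Because $g(t)$ is invertible we have $\textsl{O}(g(t))=0$, hence $\textsl{O}(g(t)t^k)=k$, and a standard triangular argument (match the lowest-order coefficient, subtract, iterate) yields a unique formal expansion $h(t)=\sum_{k\geq 0}c_k\, g(t)t^k$ for every $h(t)\in\mathcal{F}$. Applying this identity as a linear functional to $A_{n,q}(x)$ and using biorthogonality gives
\[
\langle h(t)\mid A_{n,q}(x)\rangle = \sum_{k\geq 0} c_k \langle g(t)t^k \mid A_{n,q}(x)\rangle = c_n\,[n]_q!,
\]
so $c_n=\langle h(t)\mid A_{n,q}(x)\rangle/[n]_q!$, which is exactly the stated formula.

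For part (b) I would mirror the argument on the polynomial side. Since $A_{k,q}(x)$ is $q$-Appell, $D_{q,x}$ lowers degree by one (cf.\ (\ref{1})), so each $A_{k,q}(x)$ has exact degree $k$ and $\{A_{k,q}(x)\}_{k\geq 0}$ is a basis of $\mathcal{P}$; therefore every $p(x)$ admits a unique expansion $p(x)=\sum_{k\geq 0}d_k A_{k,q}(x)$. Applying $\langle g(t)t^n\mid\cdot\rangle$ to both sides and invoking biorthogonality yields
\[
\langle g(t)t^n\mid p(x)\rangle = \sum_{k\geq 0} d_k \langle g(t)t^n \mid A_{k,q}(x)\rangle = d_n\,[n]_q!,
\]
so $d_n=\langle g(t)t^n\mid p(x)\rangle/[n]_q!$ as required.

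The only non-routine ingredient is the verification that $\{g(t)t^k\}$ and $\{A_{k,q}(x)\}$ are genuine bases of $\mathcal{F}$ and $\mathcal{P}$ respectively; both reductions are clean triangular arguments (using that $\textsl{O}(g(t)t^k)=k$ and $\deg A_{k,q}(x)=k$), so I do not expect a genuine obstacle. For a crisper writeup, the ``basis'' claim could be extracted as a one-line preliminary lemma, after which (a) and (b) drop out in parallel from the biorthogonality relation.
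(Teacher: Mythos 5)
Your proof is correct: the biorthogonality relation $\langle g(t)t^k\mid A_{n,q}(x)\rangle=[n]_q!\,\delta_{n,k}$ obtained from Theorem \ref{thsheffer} with $f(t)=t$, combined with the triangularity facts $\textsl{O}(g(t)t^k)=k$ and $\deg A_{k,q}(x)=k$, is exactly the standard Roman-style argument, and the paper itself states Theorem \ref{thappellexpansion} without proof, importing it from $q$-umbral calculus. The only point worth making explicit in a final writeup is that both infinite sums are effectively finite when paired against a fixed polynomial (since $\langle g(t)t^k\mid p(x)\rangle=0$ for $k>\deg p(x)$), which justifies interchanging the summation with the functional evaluation.
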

\begin{theorem} The following facts are equivalent\\
\label{thequivalent}
\begin{itemize}
\item[a)] $A_{n,q}(x)$ is $q$-Appell for $g(t).$
\\
\item[b)]
$tA_{n,q}(x)=[n]_qA_{n-1,q}(x),$ where $tA_{n,q}(x)=D_q(A_{n,q}(x)).$
\\
\item[c)] For all $ y \in \mathbb{C}\quad \frac{1}{g(t)}e_q(tx)=\sum\limits_{k=0}^{\infty}\frac{A_{k,q}(x)}{[k]_q!}t^k.$
\\
\item[d)]
$A_{n,q}(x)=\sum\limits_{k=0}^{\infty}\left[\begin{array}
[c]{c}%
n\\
k
\end{array}
\right]  _{q} \langle g^{-1}(t)| x^{n-k}\rangle x^k.$
\\
\item[e)]
$A_{n,q}(x)=g^{-1}(t)x^n.$\\
\end{itemize}
\end{theorem}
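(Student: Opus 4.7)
The plan is to close the cycle of implications (a)$\Rightarrow$(b)$\Rightarrow$(c)$\Rightarrow$(d)$\Rightarrow$(e)$\Rightarrow$(a), so that all five conditions become pairwise equivalent.

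For (a)$\Rightarrow$(b), I would start from the $q$-Sheffer orthogonality $\langle g(t)t^k\,|\,A_{n,q}(x)\rangle=[n]_q!\,\delta_{n,k}$ given by Theorem~\ref{thsheffer} with $f(t)=t$, and expand $tA_{n,q}(x)$ in the basis $\{A_{j,q}(x)\}$ via the Polynomial Expansion Theorem~\ref{thappellexpansion}(b). Using $\langle g(t)t^k\,|\,tA_{n,q}(x)\rangle=\langle g(t)t^{k+1}\,|\,A_{n,q}(x)\rangle=[n]_q!\,\delta_{n,k+1}$, only the coefficient at $k=n-1$ survives and yields $tA_{n,q}(x)=[n]_q A_{n-1,q}(x)$.

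For (b)$\Rightarrow$(c), I would set $F(x,t):=\sum_{k\ge 0}\frac{A_{k,q}(x)}{[k]_q!}t^k$ and apply $D_{q,x}$ termwise; relation (b) gives $D_{q,x}F(x,t)=tF(x,t)$, and writing $F(x,t)=\sum_n c_n(t)x^n/[n]_q!$ forces $c_{n+1}(t)=tc_n(t)$, hence $F(x,t)=F(0,t)\,e_q(tx)$. Since $A_{n,q}(x)$ is a polynomial sequence of strict degree $n$, the constant $A_{0,q}$ is nonzero, so $F(0,t)\in\mathcal{F}$ is invertible and declaring $1/g(t):=F(0,t)$ produces (c). For (c)$\Rightarrow$(d), I would expand $1/g(t)=\sum_m\frac{\langle g^{-1}(t)\,|\,x^m\rangle}{[m]_q!}t^m$ using~(\ref{expansionf}), multiply formally by $e_q(tx)=\sum_j x^j t^j/[j]_q!$, and identify the coefficient of $t^n/[n]_q!$; the $q$-Cauchy product gives exactly the sum displayed in~(d). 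For (d)$\Rightarrow$(e), I would reinterpret the same expansion of $g^{-1}(t)$ as a linear operator via~(\ref{operator}), apply it to $x^n$ and reindex $m\mapsto n-k$ to recover the right-hand side of~(d), hence $A_{n,q}(x)=g^{-1}(t)x^n$. Finally for (e)$\Rightarrow$(a), I would apply $g(t)$ on both sides of~(e) to obtain $g(t)A_{n,q}(x)=x^n$, and then use $\langle g(t)t^k\,|\,A_{n,q}(x)\rangle=\langle t^k\,|\,g(t)A_{n,q}(x)\rangle=\langle t^k\,|\,x^n\rangle=[n]_q!\,\delta_{n,k}$, which is the $q$-Sheffer defining condition for the pair $(g(t),t)$.

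The main obstacle is (b)$\Rightarrow$(c): one has to solve the $q$-difference equation $D_{q,x}F=tF$ inside the algebra of formal power $q$-series in $t$ with polynomial coefficients in $x$, extract the precise factorization $F(x,t)=F(0,t)e_q(tx)$, and verify that the implicit nondegeneracy of the sequence forces $A_{0,q}\neq 0$ so that $g(t):=1/F(0,t)$ is well defined as an invertible element of $\mathcal{F}$. The remaining four implications are routine algebraic manipulations using the triple role (series/functional/operator) played by each element of $\mathcal{F}$ through~(\ref{expansionf}) and~(\ref{operator}).
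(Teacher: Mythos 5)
The paper never actually proves Theorem \ref{thequivalent}: it is stated as part of the background recast from Roman's $q$-umbral calculus, so there is no internal proof to compare you with, and your proposal has to be judged on its own terms. On those terms the cycle (a)$\Rightarrow$(b)$\Rightarrow$(c)$\Rightarrow$(d)$\Rightarrow$(e)$\Rightarrow$(a) is essentially the standard umbral argument, and the individual computations are sound: (a)$\Rightarrow$(b) via Theorem \ref{thappellexpansion}(b) together with $\langle g(t)t^{k}\,|\,tA_{n,q}(x)\rangle=[n]_q!\,\delta_{n,k+1}$ (the Expansion Theorem is also stated without proof in the paper, but it does not rely on (b), so there is no circularity); the formal solution of $D_{q,x}F=tF$ giving $F(x,t)=F(0,t)e_q(tx)$; the $q$-Cauchy product for (c)$\Rightarrow$(d); the immediate identification (d)$\Leftrightarrow$(e) through the operator formula (\ref{operator}); and (e)$\Rightarrow$(a) from $\langle g(t)t^{k}\,|\,A_{n,q}(x)\rangle=\langle t^{k}\,|\,g(t)A_{n,q}(x)\rangle=\langle t^{k}\,|\,x^{n}\rangle=[n]_q!\,\delta_{n,k}$. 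Your insistence that $\deg A_{n,q}(x)=n$ forces $A_{0,q}\neq 0$, hence invertibility of $F(0,t)$, is exactly the right nondegeneracy point.

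The one genuine wrinkle is the step (b)$\Rightarrow$(c). Condition (b) makes no reference to $g(t)$, so from (b) alone you can only obtain the generating-function identity for the series you \emph{define} by $1/g(t):=F(0,t)=\sum_{k}A_{k,q}(0)t^{k}/[k]_q!$, not for the $g(t)$ appearing in (a), (c), (d), (e): a sequence that is $q$-Appell for $g_1$ satisfies (b) but violates (c) for any different $g_2$. Defining $g$ from the sequence is the only sensible reading of the (loosely stated) theorem, but then your cycle terminates at (a) for the newly defined $g$, not the originally given one, so the chain does not literally close. The fix is short and you should state it: either read (b) as ``$A_{n,q}$ is $q$-Appell for \emph{some} invertible $g$,'' or add the uniqueness observation that the sequence determines $g$ --- if the same sequence were $q$-Appell for $g$ and $g'$, part (e) gives $g^{-1}(t)x^{n}=g'^{-1}(t)x^{n}$ for all $n$, and comparing coefficients via (\ref{operator}) yields $g=g'$, so the $g$ you construct in (b)$\Rightarrow$(c) coincides with the original one whenever (a) holds. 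With that one-line patch the argument is complete.
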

\begin{remark}
Based on different selections for $g(t)$ in part (c) of Theorem (\ref{thequivalent}), we obtain various families of $q$-Appell polynomials. For instance, it is clear from relations (\ref{Bernoulli}), (\ref{Euler}) and (\ref{Genocchi}) that taking $g(t)$ as $\frac{e_{q}\left(  t\right)-1}{t}$, $\frac{e_{q}\left(  t\right)+1}{2}$ or $\frac{e_{q}\left(  t\right)+1}{2t}$, leads to construct the families of $q$-Bernoulli, $q$-Euler or $q$-Genocchi polynomials, respectively.
\end{remark}
\begin{theorem}(The Recurrence Formula for $q$-Appell Sequences)\label{threcurrence}
Suppose that $A_{n,q}(x)$ is $q$-Appell for $g(t)$. Then we have\\
$$
A_{n+1,q}(qx)=\Big[qx-q^n\frac{D_{q,t}g(t)}{g(qt)}\Big]A_{n,q}(x).
$$
\end{theorem}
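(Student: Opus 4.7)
The natural strategy is to $q$-differentiate the generating function of Theorem~\ref{thequivalent}(c) with respect to $t$ and compare coefficients. Substituting $qx$ for $x$ in that relation yields
$\frac{e_q(qtx)}{g(t)} = \sum_{n\ge 0} A_{n,q}(qx)\,\frac{t^n}{[n]_q!}$,
and applying $D_{q,t}$ converts the right-hand side, after the standard index shift $D_{q,t}(t^n)=[n]_q t^{n-1}$, into $\sum_{n\ge 0} A_{n+1,q}(qx)\,\frac{t^n}{[n]_q!}$.

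For the left-hand side I would invoke the $q$-product rule $D_{q,t}(FH)(t) = D_{q,t}F(t)\cdot H(t) + F(qt)\cdot D_{q,t}H(t)$ with $F(t)=1/g(t)$ and $H(t)=e_q(qtx)$. The quotient formula $D_{q,t}(1/g(t)) = -D_{q,t}g(t)/(g(t)g(qt))$ together with $D_{q,t}e_q(qtx) = qx\,e_q(qtx)$ (which follows from $D_q e_q=e_q$ and the linearity of $D_{q,t}$ in the monomials of $e_q$) give
$D_{q,t}\!\left[\frac{e_q(qtx)}{g(t)}\right] \;=\; qx\cdot\frac{e_q(qtx)}{g(qt)} \;-\; \frac{D_{q,t}g(t)}{g(qt)}\cdot\frac{e_q(qtx)}{g(t)}.$
Rescaling $t\mapsto qt$ in Theorem~\ref{thequivalent}(c) shows $\frac{e_q(qtx)}{g(qt)} = \sum_{n} q^n A_{n,q}(x)\,\frac{t^n}{[n]_q!}$, which is precisely how the crucial factor $q^n$ in the stated recurrence arises. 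The second factor, $\frac{D_{q,t}g(t)}{g(qt)}$, is read via the umbral algebra $\mathcal{F}$ as an operator on $\mathcal{P}$ (with $t^k$ identified with $D_q^k$, as in equation~(\ref{derpol})) acting on $A_{n,q}(x)$. Equating coefficients of $t^n/[n]_q!$ on the two sides, with the help of Proposition~\ref{P5} for the product of two power $q$-series, produces the asserted recurrence.

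The one place where it is easy to slip is reconciling the two roles of the algebra $\mathcal{F}$: in the generating-function computation $\frac{D_{q,t}g(t)}{g(qt)}$ is a formal power $q$-series multiplied with the generating series of $A_{n,q}$, whereas on the right-hand side of the recurrence it is to be applied as a $q$-differential operator to the polynomial $A_{n,q}(x)$; the bridge between these two viewpoints is exactly the content of Theorem~\ref{thequivalent}(e), $A_{n,q}(x)=g^{-1}(t)x^n$, which turns multiplication of series into the operator action $t^k=D_q^k$ on polynomials. Carefully tracking the shifts $t\mapsto qt$ and $x\mapsto qx$ that the $q$-product rule forces on us is therefore the main technical obstacle; as a sanity check it is worthwhile to verify the formula in the low-degree case $n=1$ with $g(t)=(e_q(t)-1)/t$ (so that $A_{n,q}=B_{n,q}$) before carrying out the general coefficient comparison.
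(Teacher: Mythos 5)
Your overall strategy coincides with the paper's (substitute $qx$ in Theorem \ref{thequivalent}(c), apply $D_{q,t}$ with the $q$-product rule, rescale $t\mapsto qt$ to create the factor $q^{n}$, compare coefficients), and your generating-function identity $D_{q,t}\big[\tfrac{e_q(qtx)}{g(t)}\big]=qx\,\tfrac{e_q(qtx)}{g(qt)}-\tfrac{D_{q,t}g(t)}{g(qt)}\cdot\tfrac{e_q(qtx)}{g(t)}$ is correct. The gap is in the final step: with \emph{your} grouping, equating coefficients of $t^{n}/[n]_q!$ does not give the asserted formula. The term $qx\,\tfrac{e_q(qtx)}{g(qt)}$ contributes $qx\cdot q^{n}A_{n,q}(x)=q^{n+1}xA_{n,q}(x)$, so the $q^{n}$ lands on the $x$-term; and the term $\tfrac{D_{q,t}g(t)}{g(qt)}\cdot\tfrac{e_q(qtx)}{g(t)}$ is a product of two power $q$-series whose $n$-th coefficient is the convolution $\sum_{k=0}^{n}\frac{[n]_q!}{[k]_q![n-k]_q!}\,h_k\,A_{n-k,q}(qx)$ (writing $\tfrac{D_{q,t}g(t)}{g(qt)}=\sum_k h_k t^k/[k]_q!$), i.e.\ it involves the polynomials $A_{k,q}(qx)$ and carries no $q^{n}$ — this is the opposite distribution of factors from the statement, where $q^{n}$ multiplies the operator term and the operator acts on $A_{n,q}(x)$. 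The paper avoids this by factoring the derivative term through the \emph{rescaled} series: with $A_q(t)=1/g(t)$ it writes $D_{q,t}A_q(t)\,e_q(qtx)=\frac{D_{q,t}A_q(t)}{A_q(qt)}\cdot A_q(qt)e_q(qtx)$, so that both summands ride on the single series $A_q(qt)e_q(qtx)=\sum_n q^{n}A_{n,q}(x)t^{n}/[n]_q!$ and the operator factor ends up attached to $q^{n}A_{n,q}(x)$. Your decomposition, carried out carefully, yields a recurrence of a genuinely different shape, so the sentence ``equating coefficients \dots produces the asserted recurrence'' is not justified as written; your own proposed sanity check ($n=1$, $g(t)=(e_q(t)-1)/t$) would expose the mismatch.

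A second, related point: invoking Theorem \ref{thequivalent}(e) does not by itself bridge ``multiplication of series in $t$'' and ``operator applied to $A_{n,q}(x)$''. The bridge is the computation $t^{k}A_{n,q}(x)=\frac{[n]_q!}{[n-k]_q!}A_{n-k,q}(x)$, valid because $\{A_{n,q}\}$ is $q$-Appell, which identifies the $q$-binomial convolution above with the operator action $h(t)A_{n,q}(x)$ \emph{only} when the coefficient sequence of the series being multiplied is the $q$-Appell sequence itself. In your grouping the coefficients are $A_{n,q}(qx)$, and this sequence is not $q$-Appell in $x$ (one has $D_{q,x}A_{n,q}(qx)=q[n]_qA_{n-1,q}(qx)$, with an extra $q$), so the clean operator reading fails exactly at the step you defer to Proposition \ref{P5}. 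To repair the argument, adopt the paper's factorization through $A_q(qt)e_q(qtx)$ and then carry out the convolution-to-operator identification explicitly (being attentive to the powers of $q$ and the $[n]_q$ factors, which the paper itself treats rather loosely), rather than leaving it as an appeal to the three roles of $\mathcal{F}$.
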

\begin{proof}
We prove this theorem in the light of the technique which is applied in the proof of Theorem 2 in \cite{Mah4}. Since $A_{n,q}(x)$ is $q$-Appell for $g(t)$ we can write
\begin{equation}
\frac{1}{g(t)}e_q(tqx)=\sum\limits_{n=0}^{\infty}A_{n,q}(qx)\frac{t^n}{[n]_q!}.\label{4}
\end{equation}
Take $\frac{1}{g(t)}=A_q(t)$. According to (\ref{3}), $A_q(t)$ is analytic. So, differentiating equation (\ref{4}) and multiplying  both sides of the obtained equality by t, we get
\begin{equation}
\sum\limits_{n=0}^{\infty}[n]_q A_{n,q}(qx)\frac{t^n}{[n]_q!}=A_q(qt)e_q(tqx)\Big[t\frac{D_qA_q(t)}{A_q(qt)}+tqx\Big],
\end{equation}
so it follows that
\begin{equation}
\sum\limits_{n=0}^{\infty}[n]_q A_{n,q}(qx)\frac{t^n}{[n]_q!}=\sum\limits_{n=0}^{\infty}q^nA_{n,q}(x)\frac{t^n}{[n]_q!}\Big[t\frac{D_qA_q(t)}{A_q(qt)}+tqx\Big].
\end{equation}
This means that
\begin{equation}
\sum\limits_{n=0}^{\infty}[n]_q A_{n,q}(qx)\frac{t^n}{[n]_q!}=\sum\limits_{n=1}^{\infty}\Bigg[q^{n-1}A_{n-1,q}(x)\frac{D_qA_q(t)}{A_q(qt)}+qxA_{n-1,q}(x)\Bigg]\frac{t^n}{[n]_q!},
\end{equation}
which is equivalent to write
\begin{equation}
\sum\limits_{n=0}^{\infty}[n]_q A_{n,q}(qx)\frac{t^n}{[n]_q!}=\sum\limits_{n=1}^{\infty}\Bigg[q^{n-1}\frac{D_qA_q(t)}{A_q(qt)}+qx\Bigg]A_{n-1,q}(x)\frac{t^n}{[n]_q!}.\label{5}
\end{equation}
Comparing both sides of identity(\ref{5}), we have
\begin{equation}
A_{n,q}(qx)=\Bigg[q^{n-1}\frac{D_qA_q(t)}{A_q(qt)}+qx\Bigg]A_{n-1,q}(x),
\end{equation}
whence the result.
\end{proof}

\section{$q$-Umbral perspective of $q$-Genocchi numbers and polynomials, an example of $q$-Appell sequences}
Over the pas decades, many results have been derived using Umbral as well as $q$-Umbral methods for different members of the family of Appell and $q$-Appell polynomials. In this section, we look at the characteristics and properties of $q$-Genocchi numbers and polynomials, as an example of the family of $q$-Appell polynomials, from $q$-umbral point of view. Indeed, it is possible to derive similar results to the obtained results here for the $q$-Bernoulli and $q$-Euler polynomials. The interested readers may see, for instance \cite{KIM1}-\cite{DERE}.\\
\subsection{Various results regarding $q$-Genocchi polynomials}
According to relation(\ref{Genocchi}), the sequence of $q$-Genocchi polynomials $\{G_{n,q}(x)\}_{n=0}^{\infty}$ is $q$-Appell for $g(t)=\frac{e_q(t)+1}{2t}$. Therefore, relation (\ref{delta}) for the sequence of $q$-Genocchi polynomials, $\{G_{n,q}(x)\}$, can be expressed as follows
\begin{equation}
\big\langle \frac{e_q(t)+1}{2t}t^k|G_{n,q}(x)\big\rangle=[n]_q!\delta_{n,k}, \quad n,k\geq 0.\label{deltaGen}
\end{equation}
\begin{remark}
As direct corollaries of Theorems (\ref{thequivalent}) and (\ref{threcurrence}) we have
\begin{itemize}
\item[a)] $tG_{n,q}(x)=D_qG_{n,q}(x)=[n]_qG_{n-1,q}(x),$
\\
\item[b)]
$G_{n,q}(x)=\sum\limits_{k=0}^{\infty}\left[\begin{array}
[c]{c}%
n\\
k
\end{array}
\right]  _{q} \bigg\langle \frac{2t}{e_q(t)+1}\bigg| x^{n-k}\bigg\rangle x^k,$
\\
\item[c)]
$G_{n,q}(x)=\frac{2t}{e_q(t)+1}x^n,$

\item[d)]
$G_{n+1,q}(qx)=\Bigg[qx-q^{n-1}\bigg(\frac{e_q(t)(t-1)+1}{2t^2}\bigg)\Bigg]G_{n,q}(x).$
\\
\end{itemize}
\end{remark}
\begin{proposition}
For  $n \in \mathbb{N}$  we have
$$
G_{0,q}=1, \quad \sum_{k=1}^{n}\left[\begin{array}
[c]{c}%
n+1\\
k+1
\end{array}
\right]  _{q}G_{n-k,q}=-[n+1]_q(1+G_{n,q}).
$$
\end{proposition}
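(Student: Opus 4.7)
The plan is to derive both assertions by comparing coefficients in the generating-function identity (\ref{Genocchi}) evaluated at $x=0$. Clearing denominators produces the formal $q$-series identity
\begin{equation*}
(e_q(t)+1)\sum_{n=0}^{\infty}G_{n,q}\frac{t^n}{[n]_q!}=2t
\end{equation*}
inside the umbral algebra $\mathcal{F}$. Invoking the Cauchy-type product for formal $q$-power series (equivalently, the two-factor case of Proposition \ref{P5} applied to $e_q(t)$ and the Genocchi series) rewrites the left-hand side as
\begin{equation*}
\sum_{n=0}^{\infty}\Bigg(\sum_{k=0}^{n}\left[\begin{array}{c}n\\k\end{array}\right]_{q}G_{k,q}+G_{n,q}\Bigg)\frac{t^n}{[n]_q!}.
\end{equation*}
After this reduction the whole proof is the mechanical act of reading off coefficients at suitable indices.

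First, I would extract the coefficient of $t^{0}$ to pin down the initial value of $G_{0,q}$. For the recurrence, I would then look at the coefficient of $t^{n+1}$ with $n\ge 1$, on which $2t$ contributes nothing, yielding
\begin{equation*}
\sum_{k=0}^{n+1}\left[\begin{array}{c}n+1\\k\end{array}\right]_{q}G_{k,q}+G_{n+1,q}=0.
\end{equation*}
Splitting off the extreme terms $k=n+1$ and $k=n$, for which $\left[\begin{array}{c}n+1\\n+1\end{array}\right]_{q}=1$ and $\left[\begin{array}{c}n+1\\n\end{array}\right]_{q}=[n+1]_q$, isolates the middle sum $\sum_{k=0}^{n-1}\left[\begin{array}{c}n+1\\k\end{array}\right]_{q}G_{k,q}$ and collects the boundary contribution as a linear combination of $[n+1]_qG_{n,q}$ and $G_{n+1,q}$.

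It then remains to convert the middle sum into the exact indexing used in the statement. Substituting $j=n-k$ and invoking the $q$-binomial symmetry $\left[\begin{array}{c}n+1\\k+1\end{array}\right]_{q}=\left[\begin{array}{c}n+1\\n-k\end{array}\right]_{q}$ rewrites it as $\sum_{k=1}^{n}\left[\begin{array}{c}n+1\\k+1\end{array}\right]_{q}G_{n-k,q}$, which is precisely the left-hand side displayed in the proposition. The one place where genuine care is needed, and which I expect to be the main obstacle, is aligning the boundary contribution with the stated right-hand side $-[n+1]_q(1+G_{n,q})$; should the naive coefficient comparison yield a boundary combination that does not collapse to this form, the cleanest alternative is to redo the step $q$-umbrally via (\ref{deltaGen}), evaluating the functionals $(e_q(t)+1)/(2t)$ and $(e_q(t)+1)/2$ against appropriate Genocchi polynomials and using Remark \ref{r1} to repackage the same identity directly in the required shape.
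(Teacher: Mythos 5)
Your route (clear the denominator in (\ref{Genocchi}) at $x=0$ and compare coefficients) is genuinely different from the paper's, but as proposed it cannot deliver the stated proposition, and it fails exactly at the step you postponed. Carrying out your own plan: the coefficient of $t^{0}$ in $(e_q(t)+1)\sum_{n\ge0}G_{n,q}t^{n}/[n]_q!=2t$ gives $2G_{0,q}=0$, i.e. $G_{0,q}=0$, not the claimed $G_{0,q}=1$; and the coefficient of $t^{n+1}$ gives
\begin{equation*}
\sum_{k=0}^{n-1}\left[\begin{smallmatrix} n+1\\ k\end{smallmatrix}\right]_{q}G_{k,q}
=-[n+1]_qG_{n,q}-2G_{n+1,q},
\end{equation*}
after splitting off the $k=n$ and $k=n+1$ terms (the latter doubles with the contribution of the ``$+1$'' factor). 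Your reindexing of the remaining sum to $\sum_{k=1}^{n}\left[\begin{smallmatrix} n+1\\ k+1\end{smallmatrix}\right]_{q}G_{n-k,q}$ is correct, but the boundary combination $-[n+1]_qG_{n,q}-2G_{n+1,q}$ does not collapse to $-[n+1]_q(1+G_{n,q})$: that would require $2G_{n+1,q}=[n+1]_q$ for every $n$, which is false. So the ``mechanical act of reading off coefficients'' proves a different recurrence, not the one in the proposition.

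The constant $1$ on the right-hand side and the value $G_{0,q}=1$ do not come from the generating function at all; in the paper they come from the umbral normalization (\ref{deltaGen}). Concretely, the paper computes $\big\langle \frac{e_q(t)+1}{2t}\big| x^n\big\rangle=\frac{1}{2[n+1]_q}$ and hence $\big\langle \frac{e_q(t)+1}{2t}\big| p(x)\big\rangle=\frac12\big(\int_0^1p(x)\,d_qx+p(0)\big)$, applies this with $p(x)=G_{n,q}(x)$, where by (\ref{deltaGen}) the pairing equals $[n]_q!\,\delta_{n,0}$, and separately evaluates $\int_0^1G_{n,q}(x)\,d_qx$ termwise from (\ref{IG}) using $\int_0^1x^k\,d_qx=1/[k+1]_q$; the $\delta_{n,0}$ term and the evaluation $G_{n,q}(0)$ are precisely what produce the inhomogeneous part and fix the claimed value of $G_{0,q}$. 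Your closing sentence --- that if the boundary terms do not match one could ``redo the step $q$-umbrally via (\ref{deltaGen})'' --- is therefore not a safety net but the entire content of the paper's proof, and it is left unexecuted in your proposal. Note also that your computation exposes a real tension you should not paper over: the generating function (\ref{Genocchi}) forces $G_{0,q}=0$, so the proposition's normalization is only reachable through the umbral pairing with $g(t)=\frac{e_q(t)+1}{2t}$, never through coefficient comparison alone.
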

\begin{proof}
According to the relations (\ref{q-der}), (\ref{particular}) and (\ref{deltaGen}) we can write
\begin{align*}\label{THG1}
\bigg \langle \frac{e_q(t)+1}{2t}\bigg| x^n \bigg \rangle=\frac{1}{2[n+1]_q}\bigg\langle \frac{e_q(t)+1}{t}\bigg| tx^{n+1}\bigg\rangle \\
& =\frac{1}{2[n+1]_q}=\frac{1}{2}\int_{0}^{1} x^n d_{q}x.
\end{align*}
Therefore, for an arbitrary polynomial $p(x)\in \mathcal{P}$ we can conclude
\begin{equation}
\bigg \langle \frac{e_q(t)+1}{2t}\bigg| p(x) \bigg \rangle=\frac{1}{2}\bigg(\int_{0}^{1} p(x) d_{q}x+p(0)\bigg).\label{genint}
\end{equation}
Now, from one hand if we take  $p(x)=G_{n,q}(x)$, then we have
\begin{align}
\frac{1}{2}\bigg(\int_{0}^{1} G_{n,q}(x) d_{q}x + G_{n,q}(0)\bigg)=\bigg \langle \frac{e_q(t)+1}{2t}\bigg| G_{n,q}(x) \bigg \rangle \nonumber\\
& =\bigg \langle 1 \bigg|\frac{ e_q(t)+1}{2t}G_{n,q}(x) \bigg \rangle
 = \bigg \langle t^0 \big| x^n \bigg \rangle=[n]_q!\delta_{n,0}.\label{th2eq1}
\end{align}
From another hand, considering the fact that
\begin{equation}\label{IG}
G_{n,q}(x)=\sum_{k=0}^{n}\left[\begin{array}
[c]{c}%
n\\
k
\end{array}
\right]  _{q}G_{n-k,q}x^k,
\end{equation}
we can conclude that
\begin{equation}
\int_{0}^{1} G_{n,q}(x) d_{q}x = \sum_{k=0}^{n}\left[\begin{array}
[c]{c}%
n\\
k
\end{array}
\right]  _{q}G_{n-k,q}\int_{0}^{1}x^k d_{q}x
 = \sum_{k=0}^{n}\left[\begin{array}
[c]{c}%
n\\
k
\end{array}
\right]  _{q}\frac{G_{n-k,q}(x)}{[k+1]_q}.\label{th2eq2}
\end{equation}

Comparing identity (\ref{th2eq1}) with (\ref{th2eq2}), we obtain
\begin{equation}
\int_{0}^{1} G_{n,q}(x) d_{q}x =\sum_{k=0}^{n}\left[\begin{array}
[c]{c}%
n\\
k
\end{array}
\right]  _{q}\frac{G_{n-k,q}(x)}{[k+1]_q}=\Big\{\begin{array}{cc}
                                                  2-G_{0,q}(0) & n=0 \\
                                                  -G_{0,q}(0) & n\neq 0
                                                \end{array},
\end{equation}
whence the result.
\end{proof}
\begin{remark}
According to part(b) of Theorem (\ref{thappellexpansion}), for an arbitrary polynomial $p(x) \in \mathcal{P}$ we can write
\begin{align*}
p(x)=\sum_{k=0}^{\infty}\langle \frac{e_q(t)+1}{2t}t^k| p(x) \rangle\frac{G_{k,q}(x)}{[k]_q!} \\
& =\frac{1}{2}\sum_{k=0}^{\infty}\langle \frac{e_q(t)+1}{t}| t^k p(x) \rangle\frac{G_{k,q}(x)}{[k]_q!}
& =\frac{1}{2}\sum_{k=0}^{\infty}\frac{ G_{k,q}(x)}{[k]_q!}\Big(\int_{0}^{1} t^kp(x) d_{q}x+t^kp(0)\Big).
\end{align*}
\end{remark}
\begin{remark}
We know that
$$
\langle e_q(t)t^k|(x-1)_q^n \rangle=[n]_q!\delta_{n,k}.
$$
Therefore, according to part(b) of Theorem (\ref{thappellexpansion}), for $G_{n,q}(x)$ as a polynomial chosen from $\mathcal{P}$ we can obtain
\begin{align*}
G_{n,q}(x)=\sum_{k=0}^{n}\langle e_q(t)| t^k G_{n,q}(x) \rangle\frac{(x-1)_q^n}{[k]_q!}\\
&=\sum_{k=0}^{n}\left[\begin{array}
[c]{c}%
n\\
k
\end{array}
\right]  _{q}G_{n-k,q}(1)(x-1)_q^n.
\end{align*}
\end{remark}
\begin{proposition}
For  $n \in \mathbb{N}$  we have
\begin{align*}
(x-1)_q^n= \\
& \frac{1}{2}\Bigg(\sum_{k=0}^{n}\sum_{l=0}^{n-k}\left[\begin{array}
[c]{c}%
n\\
k
\end{array}
\right]  _{q}\left[\begin{array}
[c]{c}%
n-k\\
l
\end{array}
\right]  _{q}\frac{1}{[m+1]_q}G_{k,q}(x)(-1)^{n-k-l}q^{\frac{l(l-1)}{2}}+\sum_{k=0}^{n}\left[\begin{array}
[c]{c}%
n\\
k
\end{array}
\right]  _{q}G_{k,q}(x)\Bigg).
\end{align*}
\end{proposition}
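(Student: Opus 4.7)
The plan is to apply the Polynomial Expansion Theorem (Theorem \ref{thappellexpansion}(b)) with $p(x) = (x-1)_q^n$ and with the $q$-Appell basis $\{G_{k,q}(x)\}_{k=0}^{\infty}$, whose determining series is $g(t) = \frac{e_q(t)+1}{2t}$. This yields immediately
$$
(x-1)_q^n \;=\; \sum_{k=0}^{n} \frac{1}{[k]_q!}\,\Big\langle \frac{e_q(t)+1}{2t}\, t^k \,\Big|\,(x-1)_q^n\Big\rangle \, G_{k,q}(x),
$$
so the whole task reduces to evaluating the functional coefficient $\big\langle \frac{e_q(t)+1}{2t}\,t^k \,\big|\,(x-1)_q^n\big\rangle$ in closed form.

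First, using the adjoint identity $\langle f(t)g(t)\,|\,p(x)\rangle = \langle f(t)\,|\,g(t)p(x)\rangle$, the factor $t^k$ is transferred to the polynomial slot and acts as $D_q^k$ on $(x-1)_q^n$. Then the identity (\ref{genint}) established in the preceding proof, namely
$$
\Big\langle \frac{e_q(t)+1}{2t}\,\Big|\,p(x)\Big\rangle \;=\; \frac{1}{2}\Big(\int_0^1 p(x)\,d_qx + p(0)\Big),
$$
is applied with $p(x)$ replaced by $D_q^k(x-1)_q^n$. This splits the functional coefficient into a $q$-integral term and an evaluation-at-zero term, which will yield respectively the double sum and the single sum appearing in the statement.

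Second, one expands $(x-1)_q^n$ via the $q$-binomial formula (\ref{00}), differentiates term by term to write $D_q^k(x-1)_q^n$ as a linear combination of monomials, and uses $\int_0^1 x^m\,d_qx = 1/[m+1]_q$. The constant-term (evaluation at $0$) contribution delivers the single sum, while the $q$-integral contribution delivers the double sum after collapsing the combinatorial prefactor via the identity
$$
\left[\begin{array}{c} n \\ j \end{array}\right]_q\,\frac{[n-j]_q!}{[n-j-k]_q!} \;=\; [k]_q!\left[\begin{array}{c} n \\ k \end{array}\right]_q\left[\begin{array}{c} n-k \\ n-j-k \end{array}\right]_q,
$$
together with the reindexing $l = n-j-k$. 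After dividing by the outer $[k]_q!$ from the expansion theorem, this produces exactly the prefactor $\left[\begin{array}{c} n \\ k \end{array}\right]_q\left[\begin{array}{c} n-k \\ l \end{array}\right]_q$ visible in the double sum.

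The principal obstacle is the careful bookkeeping of indices, the signs $(-1)^{\cdot}$, and the Gaussian powers $q^{\cdot(\cdot-1)/2}$ inherited from the $q$-binomial expansion of $(x-1)_q^n$ under the substitution $j \leftrightarrow n-k-l$, and correctly locating the denominator factor $[m+1]_q$ as the image of $\int_0^1 x^{m}\,d_qx$ for the chosen indexing. Once this combinatorial identity is applied and the two pieces from (\ref{genint}) are combined with the overall factor $\tfrac{1}{2}$, the claimed expansion of $(x-1)_q^n$ in the $q$-Genocchi basis follows.
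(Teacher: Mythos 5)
Your proposal follows essentially the same route as the paper's own proof: expand $(x-1)_q^n$ in the $q$-Genocchi basis via the Polynomial Expansion Theorem, transfer $t^k$ to the polynomial slot as $D_q^k$, evaluate the resulting functional with the integral formula (\ref{genint}), and finish by $q$-binomial expansion and $\int_0^1 x^m\,d_qx = 1/[m+1]_q$. The only (immaterial) difference is that the paper $q$-differentiates $(x-1)_q^n$ first and expands $(x-1)_q^{n-k}$ afterwards, whereas you expand first and differentiate term by term.
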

\begin{proof}
From the binomial relation(\ref{Binomial}), we obtain
\begin{equation}
(x-1)_q^n=\sum_{l=0}^{n}(-1)^{n-l}q^{\frac{l(l-1)}{2}}x^l.\label{Binomial1}
\end{equation}
Now, taking $k$-th $q$-derivative from both sides of identity(\ref{Binomial1}), we have
\begin{equation}
t^k(x-1)_q^n=\sum{l=k}^{n}\left[\begin{array}
[c]{c}%
n\\
k
\end{array}
\right]  _{q}=\frac{[n]_q!}{[n-k]_q!}(x-1)_q^{n-k}
\end{equation}
According to part(b) of Theorem (\ref{thappellexpansion}), we can write
\begin{align}
(x-1)_q^n=\sum_{k=0}^{n}\frac{1}{[k]!}\Big\langle \frac{e_q(t)+1}{2t}t^k| (x-1)_q^n \Big\rangle G_{n,q}(x)\\ \nonumber
 =\sum_{k=0}^{n}\left[\begin{array}
[c]{c}%
n\\
k
\end{array}
\right]  _{q}G_{n,q}(x)\Big\langle \frac{e_q(t)+1}{2t}| (x-1)_q^{n-k} \Big\rangle \\ \nonumber
= \sum_{k=0}^{n}G_{n,q}(x)\Big( \int_{0}^{1}(x-1)_q^{n-k}d_qx+1\Big)\\ \nonumber
=\frac{1}{2}\Bigg(\sum_{k=0}^{n}\sum_{l=0}^{n-k}\left[\begin{array}
[c]{c}%
n\\
k
\end{array}
\right]  _{q}\left[\begin{array}
[c]{c}%
n-k\\
l
\end{array}
\right]  _{q}\frac{1}{[m+1]_q}G_{k,q}(x)(-1)^{n-k-l}q^{\frac{l(l-1)}{2}}+\sum_{k=0}^{n}\left[\begin{array}
[c]{c}%
n\\
k
\end{array}
\right]  _{q}G_{k,q}(x)\Bigg).
\end{align}
\end{proof}

\begin{theorem}
Let $\mathcal{P}_n=\{p(x)\in \mathcal{P}|deg(p(x))\leq n\}$. Then for an arbitrary $p(x) \in \mathcal{P}_n$ and a constant $c_{n,q}$, we may assume that
$
p(x)=\sum_{i=0}^{n}c_{i,q}G_{i,q}(x).
$
 Then for any constant $k$, the coefficient $c_{k,q}$ is equal to $\frac{1}{[k]_q!}\big\langle \frac{e_q(t)+1}{2t}|p^{(k)}(x)\big\rangle$, and it can be obtained from the following relation
$$
c_{k,q}=\frac{1}{2[k]_q!}\Big(\int_{0}^{1}p^{(k)}(x)d_qx+p^{(k)}(0)\Big),
$$
where $p^{(k)}(x)=D_q^kp(x).$
\end{theorem}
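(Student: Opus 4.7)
The plan is to recognize this as an almost immediate consequence of part (b) of Theorem \ref{thappellexpansion} applied to the $q$-Appell sequence $\{G_{n,q}(x)\}$, combined with the integral representation of the functional $\frac{e_q(t)+1}{2t}$ that was already established in equation (\ref{genint}).

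First, I would justify that the expansion makes sense: since $G_{i,q}(x)$ has degree exactly $i$, the set $\{G_{0,q}(x),\dots,G_{n,q}(x)\}$ is a basis of $\mathcal{P}_n$, so any $p(x)\in\mathcal{P}_n$ admits a unique decomposition $p(x)=\sum_{i=0}^{n}c_{i,q}G_{i,q}(x)$. Next, I invoke Theorem \ref{thappellexpansion}(b) with $g(t)=\frac{e_q(t)+1}{2t}$ to get
\[
p(x)=\sum_{k=0}^{\infty}\frac{1}{[k]_q!}\bigg\langle \frac{e_q(t)+1}{2t}\,t^k \Big|\, p(x)\bigg\rangle\, G_{k,q}(x).
\]
Truncation at $k=n$ is automatic because for $k>n$ the factor $t^k p(x)=p^{(k)}(x)=0$, so by uniqueness of the basis expansion $c_{k,q}=\frac{1}{[k]_q!}\big\langle \frac{e_q(t)+1}{2t}\,t^k \big|\, p(x)\big\rangle$ for $0\le k\le n$.

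Then I would apply the functional-operator adjunction (the Proposition stating $\langle f(t)g(t)|p(x)\rangle=\langle f(t)|g(t)p(x)\rangle$) together with the identification $t^k p(x)=p^{(k)}(x)$ from (\ref{derpol}):
\[
\bigg\langle \frac{e_q(t)+1}{2t}\,t^k \Big|\, p(x)\bigg\rangle
=\bigg\langle \frac{e_q(t)+1}{2t}\, \Big|\, t^k p(x)\bigg\rangle
=\bigg\langle \frac{e_q(t)+1}{2t}\, \Big|\, p^{(k)}(x)\bigg\rangle,
\]
which yields the first claimed formula $c_{k,q}=\frac{1}{[k]_q!}\big\langle \frac{e_q(t)+1}{2t}\big|p^{(k)}(x)\big\rangle$.

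Finally, substituting $p^{(k)}(x)$ into the already-derived identity (\ref{genint}), namely $\big\langle \frac{e_q(t)+1}{2t}\big| q(x)\big\rangle=\frac{1}{2}\big(\int_0^1 q(x)\,d_q x+q(0)\big)$ for arbitrary polynomial $q(x)\in\mathcal{P}$, produces
\[
c_{k,q}=\frac{1}{2[k]_q!}\bigg(\int_0^1 p^{(k)}(x)\,d_q x+p^{(k)}(0)\bigg),
\]
which is the stated relation. There is no real obstacle here beyond bookkeeping; the only subtle point is verifying that the general expansion formula of Theorem \ref{thappellexpansion}(b) truncates correctly when $p$ has bounded degree, which follows from $p^{(k)}\equiv 0$ for $k>\deg p$, and then invoking uniqueness of the $G$-basis expansion in $\mathcal{P}_n$.
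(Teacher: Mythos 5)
Your proposal is correct and follows essentially the same route as the paper: the paper obtains $c_{k,q}=\frac{1}{[k]_q!}\big\langle \frac{e_q(t)+1}{2t}t^k\big|p(x)\big\rangle$ by applying the functional directly to the assumed sum and invoking the biorthogonality relation (\ref{deltaGen}) (which is exactly what underlies Theorem (\ref{thappellexpansion})(b) that you cite), and then concludes, just as you do, via (\ref{derpol}) and (\ref{genint}). Your additional remarks on the basis property of $\{G_{0,q}(x),\dots,G_{n,q}(x)\}$, uniqueness of the coefficients, and the truncation of the expansion for $k>n$ are correct details that the paper leaves implicit.
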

\begin{proof}
For any polynomial $p(x)=\sum_{i=0}^{n}c_{i,q}G_{i,q}(x)$ in $\mathcal{P}_n$, we may write
\begin{equation}
\big\langle \frac{e_q(t)+1}{2t}t^k|p(x)\big\rangle=\sum_{i=0}^{n}c_{i,q}\big\langle \frac{e_q(t)+1}{2t}t^k|G_{i,q}(x)\big\rangle.
\end{equation}
So, according to the relation (\ref{deltaGen}), we obtain
\begin{equation}
=\sum_{i=0}^{n}c_{i,q}[i]_q!\delta_{i,k}=[k]_q!c_{k,q},
\end{equation}
which means that
\begin{equation}
c_{k,q}=\frac{1}{[k]_q!}\big\langle \frac{e_q(t)+1}{2t}t^k|p(x)\big\rangle.
\end{equation}
According to the relation (\ref{derpol}), this is equivalent to write
\begin{equation}\label{ck}
c_{k,q}=\frac{1}{[k]_q!}\big\langle \frac{e_q(t)+1}{2t}|t^k p(x)\big\rangle=\frac{1}{[k]_q!}\big\langle \frac{e_q(t)+1}{2t}|p^{(k)}(x)\big\rangle.
\end{equation}
finally, using the relation (\ref{genint}), we obtain
\begin{equation}
c_{k,q}=\frac{1}{2[k]_q!}\Big(\int_{0}^{1}p^{(k)}(x)d_qx+p^{(k)}(0)\Big).
\end{equation}
\end{proof}
\subsection{Some results regarding $q$-Genocchi polynomials of higher order}
Let $q\in \mathbb{C}, m\in \mathbb{N}$ and $0<|q|<1$. The $q$-Genocchi polynomials $G^{[m]}_{n,q}(x)$ in $x$, of order $m$, in a suitable neighborhood of $t=0$, are defined by means of the following generating function, \cite{Mah1}
\begin{equation}
\Big( \frac{2t}{e_q(t)+1}\Big)^m e_q(tx)=\sum_{n=0}^{\infty}G^{[m]}_{n,q}(x)\frac{t^n}{[n]_q!}.\label{GenOrder}
\end{equation}
In case that $x=0$, $G^{[m]}_{n,q}(0)=G^{[m]}_{n,q}$ is called the $n$-th $q$-Genocchi number of order $m$. \\
From the above definition, it is clear that the class of $q$-Genocchi polynomials, $\{G^{[m]}_{n,q}(x)\}_{n=0}^{\infty}$, of order $m$ is $q$-Appell for $g(t)=\Big( \frac{e_q(t)+1}{2t}\Big)^m$. Thus, according to the relation (\ref{delta}), for the sequence of $q$-Genocchi polynomials, $G^{[m]}_{n,q}(x)$, of order $m$, we can write
\begin{equation}
\Big\langle \Big( \frac{e_q(t)+1}{2t}\Big)^m t^k|G^{[m]}_{n,q}(x)\Big\rangle=[n]_q!\delta_{n,k}, \quad n,k\geq 0.\label{deltaOrderGen}
\end{equation}

\begin{lemma}
For any $n\in\mathbb{N}_{0}$, the following identity holds for the $n$-th $q$-Genocchi number of order $m$
$$
G^{[m]}_{n,q}=\sum\limits_{i_1+i_2+\ldots+i_m=n} \nonumber
\left[
\begin{array}
[c]{c}%
n\\
i_1,i_2,\ldots,i_m
\end{array}
\right]  _{q}G_{i_1,q}G_{i_2,q}\ldots G_{i_m,q}
$$
\end{lemma}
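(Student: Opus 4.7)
The plan is to deduce the identity as a direct application of Proposition \ref{P5} once we interpret both sides correctly in the $q$-umbral algebra $\mathcal{F}$.

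First I would identify $G_{n,q}^{[m]}$ as the value of a linear functional on $x^n$. From Theorem \ref{thequivalent}(d) applied to the $q$-Appell sequence $\{G^{[m]}_{n,q}(x)\}$ for $g(t)=\left(\frac{e_q(t)+1}{2t}\right)^m$, we have
\[
G^{[m]}_{n,q}(x)=\sum\limits_{k=0}^{n}\left[\begin{array}{c}n\\k\end{array}\right]_{q}\Big\langle \Big(\frac{2t}{e_q(t)+1}\Big)^m\Big|x^{n-k}\Big\rangle x^k,
\]
so setting $x=0$ yields $G^{[m]}_{n,q}=\big\langle \big(\tfrac{2t}{e_q(t)+1}\big)^m\big| x^n\big\rangle$. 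The same reasoning with $m=1$ gives $G_{i,q}=\big\langle \tfrac{2t}{e_q(t)+1}\big| x^i\big\rangle$.

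Next I would factor the functional as an $m$-fold product. Write $f_1(t)=f_2(t)=\cdots=f_m(t)=\tfrac{2t}{e_q(t)+1}\in\mathcal{F}$. Then
\[
G^{[m]}_{n,q}=\big\langle f_1(t)f_2(t)\cdots f_m(t)\big| x^n\big\rangle,
\]
and Proposition \ref{P5} immediately gives
\[
G^{[m]}_{n,q}=\sum\limits_{i_1+i_2+\cdots+i_m=n}\left[\begin{array}{c}n\\i_1,i_2,\ldots,i_m\end{array}\right]_{q}\big\langle f_1(t)|x^{i_1}\big\rangle\big\langle f_2(t)|x^{i_2}\big\rangle\cdots\big\langle f_m(t)|x^{i_m}\big\rangle,
\]
which, upon replacing each inner functional value by $G_{i_j,q}$, is exactly the claimed identity.

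There is really no serious obstacle: the proof is essentially a one-line consequence of the multilinear action formula in Proposition \ref{P5}, once the factorization $(2t/(e_q(t)+1))^m = f_1(t)\cdots f_m(t)$ is recognized and $G_{i,q}$ is reidentified as $\langle 2t/(e_q(t)+1)\,|\,x^i\rangle$. The only minor point to be careful about is that Proposition \ref{P5} as stated is a finite sum (the terms with any $i_j>n$ vanish because $\langle f_j(t)|x^{i_j}\rangle$ contributes only through the finitely many compositions of $n$), so no convergence issue arises.
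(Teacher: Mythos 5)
Your proof is correct and takes essentially the same route as the paper: identify $G^{[m]}_{n,q}$ and $G_{i,q}$ as the values of the functionals $\big(\tfrac{2t}{e_q(t)+1}\big)^m$ and $\tfrac{2t}{e_q(t)+1}$ on $x^n$ and $x^i$, respectively, and then apply Proposition (\ref{P5}) to the $m$-fold factorization of the determining function. The only cosmetic difference is that you justify this identification via Theorem (\ref{thequivalent})(d) evaluated at $x=0$, whereas the paper reads it off directly from the generating function (\ref{GenOrder}) using $\langle t^k|x^n\rangle=[n]_q!\,\delta_{n,k}$.
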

\begin{proof}
From one hand, according to the relation (\ref{GenOrder}), it is obvious that
\begin{equation}
\Big\langle \Big( \frac{2t}{e_q(t)+1}\Big)^m t^k|x^n\Big\rangle=\sum_{k=0}^{\infty}\frac{G^{[m]}_{n,q}}{[k]_q!}\langle t^k| x^n \rangle=G^{[m]}_{n,q}.
\end{equation}
From another hand, according to the Proposition (\ref{P5}), we have
\begin{equation}
G^{[m]}_{n,q}=\sum\limits_{i_1+i_2+\ldots+i_m=n}
\left[
\begin{array}
[c]{c}%
n\\
i_1,i_2,\ldots,i_m
\end{array}
\right]  _{q}\langle \frac{2t}{e_q(t)+1}| x^{i_1}\rangle\langle \frac{2t}{e_q(t)+1}| x^{i_2}\rangle\ldots\langle \frac{2t}{e_q(t)+1}| x^{i_m}\rangle.
\end{equation}
Based on the relations (\ref{Genocchi}) and (\ref{particular})  for each $\langle \frac{2t}{e_q(t)+1}| x^{i_l}\rangle, \quad l \in \{1, 2, \ldots , m \}$ we can write
\begin{equation}
\big\langle \frac{2t}{e_q(t)+1}|x^{i_l}\big\rangle=\sum_{k=0}^{\infty}\frac{G_{i_l,q}}{[k]!}\langle t^k|x^{i_l}\rangle=G_{i_l,q},
\end{equation}
whence the result.
\end{proof}
\begin{theorem}
For any $n\in\mathbb{N}_{0}$, the following identity holds for the $n$-th $q$-Genocchi polynomial of order $m$
$$
G^{[m]}_{n,q}(x)=\sum_{k=0}^{n}\left[
\begin{array}
[c]{c}%
n\\
k
\end{array}
\right]  _{q}\big\langle \frac{e_q(t)+1}{2t}|G^{[m]}_{n-k,q}(x) \big\rangle G_{k,q}(x)
=\frac{1}{2^{m-1}}\sum_{k=0}^{n}\left[
\begin{array}
[c]{c}%
n\\
k
\end{array}
\right]  _{q}G^{[m-1]}_{n-k,q}G_{k,q}(x).
$$
\end{theorem}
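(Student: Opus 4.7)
My plan is to obtain both equalities in one motion by applying the Polynomial Expansion Theorem (Theorem \ref{thappellexpansion}(b)) to $p(x)=G^{[m]}_{n,q}(x)$ in the $q$-Appell basis $\{G_{k,q}(x)\}$, whose associated function is $g(t)=\frac{e_q(t)+1}{2t}$. This produces the raw expansion
\begin{equation*}
G^{[m]}_{n,q}(x)=\sum_{k=0}^{n}\frac{1}{[k]_q!}\Big\langle \frac{e_q(t)+1}{2t}\,t^{k}\,\Big|\,G^{[m]}_{n,q}(x)\Big\rangle\,G_{k,q}(x),
\end{equation*}
and the two stated formulas then arise from two successive simplifications of the bracket on the right.

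For the first equality I will push $t^k$ inside using the Proposition $\langle f(t)h(t)|p(x)\rangle=\langle f(t)|h(t)p(x)\rangle$, converting the bracket into $\langle \frac{e_q(t)+1}{2t}\,|\,t^{k}G^{[m]}_{n,q}(x)\rangle$. Since $\{G^{[m]}_{n,q}(x)\}$ is itself $q$-Appell (for $g(t)^m$), iterating part (b) of Theorem \ref{thequivalent} gives $t^{k}G^{[m]}_{n,q}(x)=\frac{[n]_q!}{[n-k]_q!}G^{[m]}_{n-k,q}(x)$; combining this with the numerical identity $\frac{1}{[k]_q!}\cdot\frac{[n]_q!}{[n-k]_q!}=\left[\begin{array}{c} n\\ k\end{array}\right]_q$ reproduces the first displayed expression (and the sum can be truncated at $n$ since $t^{k}G^{[m]}_{n,q}(x)=0$ for $k>n$).

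For the second equality I will evaluate $\big\langle \frac{e_q(t)+1}{2t}\,|\,G^{[m]}_{n-k,q}(x)\big\rangle$ by passing to the operator description of the argument. The order-$m$ analogue of part (e) of Theorem \ref{thequivalent} gives $G^{[m]}_{n-k,q}(x)=\big(\frac{2t}{e_q(t)+1}\big)^{m}x^{n-k}$, and a further application of the same product rule telescopes the formal factor $\frac{e_q(t)+1}{2t}\cdot\big(\frac{2t}{e_q(t)+1}\big)^{m}$ down to $\big(\frac{2t}{e_q(t)+1}\big)^{m-1}$, so that
\begin{equation*}
\Big\langle \frac{e_q(t)+1}{2t}\,\Big|\,G^{[m]}_{n-k,q}(x)\Big\rangle=\Big\langle\Big(\frac{2t}{e_q(t)+1}\Big)^{m-1}\,\Big|\,x^{n-k}\Big\rangle=G^{[m-1]}_{n-k,q},
\end{equation*}
by the action rule (\ref{action}) together with the defining expansion (\ref{GenOrder}). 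Substituting back into the first equality produces the final form.

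The principal obstacle is rigor at the operator/functional level: $\frac{e_q(t)+1}{2t}$ carries a $t^{-1}$ component and is therefore not literally a member of $\mathcal{F}$, so neither a functional nor an operator in the framework of Section~1 in the strict sense. The product rule must consequently be invoked on the already-cancelled product $\big(\frac{2t}{e_q(t)+1}\big)^{m-1}$ rather than on the two factors separately, and this cancellation is what legitimises every bracket manipulation above. A secondary bookkeeping point is numerical: the telescoping I described outputs $G^{[m-1]}_{n-k,q}$ with no prefactor, whereas the statement carries an extra $1/2^{m-1}$; pinpointing where this power of $2$ enters (presumably by regrouping $g(t)^{m}=2^{-m}\big(\frac{e_q(t)+1}{t}\big)^{m}$ before telescoping, then redistributing the $2$'s between the functional and the remaining factor) is the final piece of arithmetic I would have to settle.
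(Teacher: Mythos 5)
Your strategy coincides with the paper's: expand $G^{[m]}_{n,q}(x)$ in the basis $\{G_{k,q}(x)\}$, identify $c_{k,q}=\frac{1}{[k]_q!}\big\langle \frac{e_q(t)+1}{2t}t^k\big|G^{[m]}_{n,q}(x)\big\rangle$, push $t^k$ onto the argument and use the iterated $q$-Appell property $t^kG^{[m]}_{n,q}(x)=\frac{[n]_q!}{[n-k]_q!}G^{[m]}_{n-k,q}(x)$ to obtain the first displayed equality; the paper does exactly this, routing it through its relation (\ref{ck}), which is the same expansion-theorem computation you carry out. For the second equality you again follow the paper's route (operator representation of $G^{[m]}_{n-k,q}(x)$ plus the product rule), so there is no methodological difference anywhere.

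The substantive point is the one you leave open, and you should know it cannot be settled the way you hope: the computation that Theorem (\ref{thequivalent})(e) actually licenses is the one you did, $G^{[m]}_{n-k,q}(x)=\big(\tfrac{2t}{e_q(t)+1}\big)^m x^{n-k}$, whence $\big\langle \frac{e_q(t)+1}{2t}\big|G^{[m]}_{n-k,q}(x)\big\rangle=\big\langle\big(\tfrac{2t}{e_q(t)+1}\big)^{m-1}\big|x^{n-k}\big\rangle=G^{[m-1]}_{n-k,q}$ by (\ref{GenOrder}) and (\ref{action}), with no power of $2$. The paper's $2^{-(m-1)}$ enters its proof only because it substitutes $G^{[m]}_{n-k,q}(x)=\big(\tfrac{e_q(t)+1}{2t}\big)^m x^{n-k}$, i.e.\ $g(t)x^{n-k}$ rather than $g^{-1}(t)x^{n-k}$ (and simultaneously inverts the functional when moving it inside), and then extracts the $2$'s from that inverted factor; done consistently, the powers of $2$ cancel completely, so your constant is the one the method yields and the prefactor in the statement appears to be an artifact of that slip rather than a piece of arithmetic you failed to find. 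So as a proof of the literal statement your proposal stops short, but not for lack of an idea. Finally, the rigor caveat you raise is real and is shared with the paper: $\frac{e_q(t)+1}{2t}$ is not in $\mathcal{F}$ (indeed $g(t)$ here is not invertible, $G_{0,q}(x)=0$ and $\deg G_{k,q}=k-1$), so every bracket involving it must be given meaning either through the pre-cancelled products or through (\ref{genint}); since $t$ annihilates constants, $\tfrac{1}{t}\,t\neq 1$ as operators and the two conventions need not agree, so the telescoping step (and even the biorthogonality (\ref{deltaGen}) underlying the first equality) rests on the same formal license the paper takes silently.
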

\begin{proof}
According to the relation (\ref{GenOrder}), it is clear that
\begin{equation}
G^{[m]}_{n,q}(x)=\sum_{k=0}^{n}\left[
\begin{array}
[c]{c}%
n\\
k
\end{array}
\right]_{q}G^{[m]}_{n-k,q}x^k.\label{xk}
\end{equation}
Therefore, we may assume that $G^{[m]}_{n,q}(x)=\sum_{k=0}^{n}c_{k,q}G_{k,q}(x)$ is a polynomial with degree $n$ in $\mathcal{P}_n$. Since $G^{[m]}_{n,q}(x)$ is a $q$-Appell polynomial, according to part(b) of Theorem (\ref{thequivalent}) for its $k$-th $q$-derivative we can write
\begin{equation}
D_q^k G^{[m]}_{n,q}(x)=[n]_q[n-1]_q \ldots [n-k+1]_q G^{[m]}_{n-k,q}(x)=\frac{[n]_q!}{[n-k]_q!}G^{[m]}_{n-k,q}(x).
\end{equation}
Now, according to the relation (\ref{ck}), we may continue as
\begin{align}\label{ck1}\nonumber
c_{k,q}=  \frac{1}{[k]_q!}\big\langle \frac{e_q(t)+1}{2t}|t^k G^{[m]}_{n,q}(x) \big\rangle \\ & & & &
=\frac{1}{[k]_q!}\big\langle \frac{e_q(t)+1}{2t}|D_q^k G^{[m]}_{n,q}(x) \big\rangle
=\left[
\begin{array}
[c]{c}%
n\\
k
\end{array}
\right]  _{q}\big\langle \frac{e_q(t)+1}{2t}|G^{[m]}_{n-k,q}(x) \big\rangle.
\end{align}
According to part(e) of  Theorem (\ref{thequivalent}), it is clear that the $q$-Appell polynomial $G^{[m]}_{n-k,q}(x)$ is equal to $\Big( \frac{e_q(t)+1}{2t}\Big)^m x^{n-k}$. As the result of this fact and noting to the relation (\ref{t0}), we obtain from the last identity in (\ref{ck1})
\begin{equation}
c_{k,q}=\left[
\begin{array}
[c]{c}%
n\\
k
\end{array}
\right]  _{q}\big\langle t^0|\frac{2t}{e_q(t)+1}\Big( \frac{e_q(t)+1}{2t}\Big)^m x^{n-k} \big\rangle=\frac{1}{2^{m-1}}\left[
\begin{array}
[c]{c}%
n\\
k
\end{array}
\right]  _{q}G^{[m-1]}_{n-k,q},
\end{equation}
whence the result.
\end{proof}
\begin{theorem}\label{GPS}
For any arbitrary polynomial $p(x) \in \mathcal{P}_n$ the following identity holds
$$p(x)=\sum_{k=0}^{n}\Big\langle \Big( \frac{e_q(t)+1}{2t}\Big)^m t^k\big |p(x)\Big\rangle\frac{G^{[m]}_{k,q}(x)}{[k]_q!} .$$
\end{theorem}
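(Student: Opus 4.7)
The plan is to recognize this statement as a direct specialization of the Polynomial Expansion Theorem (part (b) of Theorem \ref{thappellexpansion}) to the higher-order $q$-Genocchi family. As noted just above in (\ref{deltaOrderGen}), the sequence $\{G^{[m]}_{n,q}(x)\}_{n=0}^{\infty}$ is $q$-Appell for the invertible series $g(t)=\bigl(\tfrac{e_q(t)+1}{2t}\bigr)^{m}$. Feeding this choice of $g(t)$ into part (b) of Theorem \ref{thappellexpansion} yields, for every $p\in\mathcal{P}$,
\[
p(x)=\sum_{k=0}^{\infty}\frac{\bigl\langle\bigl(\tfrac{e_q(t)+1}{2t}\bigr)^{m}t^{k}\,\big|\,p(x)\bigr\rangle}{[k]_q!}\,G^{[m]}_{k,q}(x),
\]
which is already the claimed formula up to the upper limit of summation.

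The only remaining task is to justify truncating the series at $k=n$ when $p\in\mathcal{P}_n$. For this I would combine the identity $\langle f(t)g(t)\mid p(x)\rangle=\langle f(t)\mid g(t)p(x)\rangle$ (established in the proposition at the start of the $q$-umbral subsection) with the operator interpretation (\ref{derpol}) of $t^{k}$ as the $k$-th $q$-derivative $D_q^{k}$. This gives
\[
\Bigl\langle\Bigl(\tfrac{e_q(t)+1}{2t}\Bigr)^{m}t^{k}\,\Big|\,p(x)\Bigr\rangle=\Bigl\langle\Bigl(\tfrac{e_q(t)+1}{2t}\Bigr)^{m}\,\Big|\,p^{(k)}(x)\Bigr\rangle,
\]
and since $\deg p\le n$ forces $p^{(k)}(x)\equiv 0$ for every $k>n$, all terms with index exceeding $n$ vanish from the expansion. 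The sum therefore collapses to $\sum_{k=0}^{n}$, as required.

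Structurally there is no real obstacle: the statement is essentially a repackaging of Theorem \ref{thappellexpansion}(b) for the determining function $g(t)=\bigl(\tfrac{e_q(t)+1}{2t}\bigr)^{m}$, with a one-line truncation argument appended. If any step needs care, it is only the bookkeeping required to show that the pairing $\langle g(t)t^{k}\mid p(x)\rangle$ indeed annihilates polynomials whose degree is strictly less than $k$; but this is immediate from (\ref{particular}) and the functional-operator duality already built up in Section 1.2. Thus the entire proof reduces to invoking the Polynomial Expansion Theorem and observing the finite support of the expansion coefficients.
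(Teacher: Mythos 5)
Your proposal is correct, and it reaches the result by a slightly different organization than the paper. The paper does not cite Theorem \ref{thappellexpansion}(b) at all: it starts by \emph{assuming} a finite expansion $p(x)=\sum_{i=0}^{n}c_{i,q}G^{[m]}_{i,q}(x)$ (implicitly using that the higher-order $q$-Genocchi polynomials of degrees $0,\dots,n$ form a basis of $\mathcal{P}_n$), applies the functional $\bigl\langle\bigl(\tfrac{e_q(t)+1}{2t}\bigr)^{m}t^{k}\,\big|\,\cdot\,\bigr\rangle$ to both sides, and reads off $c_{k,q}=\frac{1}{[k]_q!}\bigl\langle\bigl(\tfrac{e_q(t)+1}{2t}\bigr)^{m}t^{k}\,\big|\,p(x)\bigr\rangle$ directly from the biorthogonality relation (\ref{deltaOrderGen}); in effect it re-derives the expansion theorem in this special case. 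You instead invoke the general Polynomial Expansion Theorem with $g(t)=\bigl(\tfrac{e_q(t)+1}{2t}\bigr)^{m}$ and then justify truncating the infinite sum at $k=n$ via $\bigl\langle g(t)t^{k}\mid p(x)\bigr\rangle=\bigl\langle g(t)\mid p^{(k)}(x)\bigr\rangle=0$ for $k>n$. Both arguments rest on the same duality $\bigl\langle g(t)t^{k}\mid G^{[m]}_{n,q}(x)\bigr\rangle=[n]_q!\,\delta_{n,k}$; what your version buys is that the finite range of summation is proved rather than assumed, and you avoid having to posit the basis property of $\{G^{[m]}_{i,q}\}_{i=0}^{n}$ up front, while the paper's version is self-contained and does not depend on the earlier general theorem. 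Either write-up is acceptable.
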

\begin{proof}
Assume that $p(x)=\sum_{i=0}^{n}c_{i,q}G^{[m]}_{i,q}(x)$. Therefore, noting to the  relation (\ref{deltaOrderGen}) for the $q$-Appell polynomial $G^{[m]}_{i,q}(x)$, we may conclude that
\begin{equation}
\Big\langle \Big( \frac{e_q(t)+1}{2t}\Big)^m t^k\big |p(x)\Big\rangle=\sum_{i=0}^{n}c_{i,q}\Big\langle \Big( \frac{e_q(t)+1}{2t}\Big)^m t^k\big|G^{[m]}_{i,q}(x)\Big\rangle=\sum_{i=0}^{n}c_{i,q}[i]_q!\delta_{i,k}=c_{k,q}[k]_q!.
\end{equation}
Thus,
\begin{equation}
c_{k,q}=\frac{1}{[k]_q!}\Big\langle \Big( \frac{e_q(t)+1}{2t}\Big)^m t^k\big |p(x)\Big\rangle.
\end{equation}
Substituting $c_{k,q}$  in the summation assumed in the beginning of the proof, leads to obtain the desired result.
\end{proof}
\begin{theorem}\label{THG2}
For any $n\in\mathbb{N}_{0}$ and any $m\in \mathbb{N}$, the $n$-th $q$-Genocchi polynomial can be expressed based on the following relation
\begin{equation*}
\begin{split}
G_{n,q}(x) & =\sum_{k=0}^{m-1}\frac{\left[
\begin{array}
[c]{c}%
m\\
k
\end{array}
\right]_{q}}{2^m[m]_q!\left[
\begin{array}
[c]{c}%
n+m-k\\
m-k
\end{array}
\right]_{q}}\times \\
& \Bigg\{\sum_{i=0}^{m}\left[
\begin{array}
[c]{c}%
m\\
i
\end{array}
\right]_{q}\sum_{l=0}^{n+m-k}\sum\limits_{l_1+l_2+\ldots+l_i=l}
\left[
\begin{array}
[c]{c}%
l\\
l_1,l_2,\ldots,l_i
\end{array}
\right]_{q}\left[
\begin{array}
[c]{c}%
n+m-k\\
l
\end{array}
\right]_{q}G_{n+m-k-l,q}\Bigg\}G^{[m]}_{k,q}(x)\end{split}
\end{equation*}
\begin{equation*}
\begin{split}
 & + \sum_{k=m}^{n}\frac{\left[
\begin{array}
[c]{c}%
n\\
k-m
\end{array}
\right]_{q}}{2^m[k]_q!\left[
\begin{array}
[c]{c}%
k\\
m
\end{array}
\right]_{q}}\times\\ &
\Bigg\{\sum_{i=0}^{m}\sum_{l=0}^{n-k+m}\sum\limits_{l_1+l_2+\ldots+l_i=l}
\left[
\begin{array}
[c]{c}%
l\\
l_1,l_2,\ldots,l_i
\end{array}
\right]_{q}\left[
\begin{array}
[c]{c}%
n+m-k\\
l
\end{array}
\right]_{q}G_{n-k+m-l,q}\Bigg\}G^{[m]}_{k,q}(x)
\end{split}
\end{equation*}
\end{theorem}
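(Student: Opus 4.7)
The plan is to apply Theorem \ref{GPS} to the polynomial $p(x)=G_{n,q}(x)\in\mathcal{P}_n$ and then to evaluate the resulting coefficients explicitly. Setting $g(t)=(e_q(t)+1)/(2t)$, Theorem \ref{GPS} yields
$$G_{n,q}(x)=\sum_{k=0}^{n}c_{k,q}\,G^{[m]}_{k,q}(x),\qquad c_{k,q}=\frac{1}{[k]_q!}\big\langle g(t)^m\,t^k\,\big|\,G_{n,q}(x)\big\rangle,$$
so the theorem reduces to computing $c_{k,q}$ in closed form and recognising the two pieces of the stated formula.

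To evaluate $c_{k,q}$ I would first slide $t^k$ off the functional, writing $\langle g(t)^m t^k\mid G_{n,q}(x)\rangle=\langle g(t)^m\mid t^k G_{n,q}(x)\rangle$, and apply the Appell property from part (b) of Theorem \ref{thequivalent} to get $t^k G_{n,q}(x)=\tfrac{[n]_q!}{[n-k]_q!}G_{n-k,q}(x)$. The factor $g(t)^m=(e_q(t)+1)^m/(2t)^m$ still carries a formal $1/t^m$, which I would absorb by applying the same Appell identity in reverse: since $t^m G_{n-k+m,q}(x)=\tfrac{[n-k+m]_q!}{[n-k]_q!}G_{n-k,q}(x)$, one has $G_{n-k,q}(x)=\tfrac{[n-k]_q!}{[n-k+m]_q!}\,t^m G_{n-k+m,q}(x)$. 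The $t^m$ cancels the $1/t^m$ inside $g(t)^m$ and the pairing reduces, uniformly in $k$, to
$$\big\langle g(t)^m t^k\,\big|\,G_{n,q}(x)\big\rangle=\frac{[n]_q!}{2^m\,[n-k+m]_q!}\,\big\langle (e_q(t)+1)^m\,\big|\,G_{n-k+m,q}(x)\big\rangle .$$

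The remaining pairing I would evaluate by binomially expanding $(e_q(t)+1)^m=\sum_{i=0}^{m}\binom{m}{i}e_q(t)^i$ and applying Proposition \ref{P5} with $f_1=\cdots=f_i=e_q(t)$ together with Remark \ref{r1} (which gives $\langle e_q(t)\mid x^{l_j}\rangle=1$). This produces $\langle e_q(t)^i\mid x^l\rangle=\sum_{l_1+\cdots+l_i=l}\binom{l}{l_1,\ldots,l_i}_q$. Expanding $G_{n-k+m,q}(x)$ through (\ref{IG}) and combining reproduces exactly the inner triple sum displayed in the theorem. The split into the ranges $0\le k\le m-1$ and $m\le k\le n$ then arises from appropriate $q$-factorial rewritings of the prefactor $\tfrac{[n]_q!}{2^m[k]_q![n-k+m]_q!}$: on the first range one exhibits it via $\binom{m}{k}_q$ and $\binom{n+m-k}{m-k}_q$, and on the second via $\binom{n}{k-m}_q$ and $\binom{k}{m}_q$, each verified by elementary cancellation of $q$-factorials.

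The main obstacle is the handling of the formal $1/t^m$ inside $g(t)^m$: this is what forces the Appell property to be used in both directions, first to move $t^k$ off the functional side and then to produce a canceling $t^m$ on the polynomial side. Once that bookkeeping step is done, the remainder of the argument is a direct application of the binomial expansion, Proposition \ref{P5}, and routine manipulations with $q$-binomial coefficients.
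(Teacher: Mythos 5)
Your route is the paper's own: apply Theorem \ref{GPS} with $p(x)=G_{n,q}(x)$, move $t^k$ across the pairing, absorb the formal $1/t^m$ by running the Appell identity backwards so that a positive power of $t$ cancels it, expand $(e_q(t)+1)^m$, and finish with Proposition \ref{P5}, Remark \ref{r1} and the expansion (\ref{IG}). Your uniform-in-$k$ reduction
\begin{equation*}
c_{k,q}=\frac{[n]_q!}{2^m\,[k]_q!\,[n+m-k]_q!}\,\big\langle (e_q(t)+1)^m\,\big|\,G_{n+m-k,q}(x)\big\rangle
\end{equation*}
is correct, and is in fact a cleaner organization than the paper's separate treatments of $k<m$ and $k\ge m$, which perform exactly the same two shifts.

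The one step you declare routine, however, does not go through as claimed. The prefactor rewriting on $0\le k\le m-1$ is a genuine identity, but on $m\le k\le n$ one has
\begin{equation*}
\frac{\left[\begin{smallmatrix}n\\k-m\end{smallmatrix}\right]_q}{[k]_q!\,\left[\begin{smallmatrix}k\\m\end{smallmatrix}\right]_q}
=\frac{[m]_q!\,[n]_q!}{([k]_q!)^2\,[n+m-k]_q!}
=\frac{[m]_q!}{[k]_q!}\cdot\frac{[n]_q!}{[k]_q!\,[n+m-k]_q!},
\end{equation*}
so the coefficient displayed in the theorem differs from your (correct) uniform prefactor by the factor $[m]_q!/[k]_q!$, and no cancellation of $q$-factorials will reconcile them except at $k=m$. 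Likewise your expansion of $(e_q(t)+1)^m$ produces ordinary binomial coefficients $\binom{m}{i}$, whereas the first displayed sum of the statement carries $\left[\begin{smallmatrix}m\\i\end{smallmatrix}\right]_q$ and the second carries no such factor at all, so the inner triple sum is not reproduced ``exactly'' either. These mismatches originate in the paper itself: its intermediate step $\frac{[k-m]_q!}{2^m[k]_q!}\left[\begin{smallmatrix}n\\k-m\end{smallmatrix}\right]_q\langle\cdots\rangle$ agrees with your uniform formula, but the passage to its final display introduces the spurious $\left[\begin{smallmatrix}k\\m\end{smallmatrix}\right]_q$. Your derivation is therefore sound up to the closed form for $c_{k,q}$, but you should not assert that elementary cancellation verifies the stated second prefactor; a complete write-up has to either stop at the uniform expression for $c_{k,q}$ or explicitly correct the printed statement.
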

\begin{proof}
In Theorem (\ref{GPS}), take $p(x)$ to be the $n$-th $q$-Genocchi polynomial $G_{n,q}(x)$, that is
\begin{equation}\label{I0}
G_{n,q}(x)=\sum_{k=0}^{n}c_{k,q}G^{[m]}_{k,q}(x),
\end{equation}
where
\begin{equation}\label{km}
c_{k,q}=\frac{1}{[k]_q!}\Big\langle \Big( \frac{e_q(t)+1}{2t}\Big)^m t^k\big |G_{n,q}(x)\Big\rangle.
\end{equation}
Then, for $k < m$, we have
\begin{align*}
c_{k,q}=\frac{1}{2^m[k]_q!}\Big\langle \frac{ {(e_q(t)+1)}^m}{t^{m-k}}|G_{n,q}(x)\Big\rangle \\
=\frac{1}{2^m[k]_q!}.\frac{1}{[n+m-k]_q!\ldots [n+1]_q!}\Big\langle {(e_q(t)+1)}^m{\Big(\frac{1}{t}}\Big)^{m-k}|t^{m-k}G_{n+m-k,q}(x)\Big\rangle \\
=\frac{[m]_q!}{2^m[k]_q![m-k]_q!}.\frac{[m-k]_q!}{[n+m-k]_q!\ldots [n+1]_q!}\Big\langle {(e_q(t)+1)}^m|G_{n+m-k,q}(x)\Big\rangle\\
=\frac{\left[
\begin{array}
[c]{c}%
m\\
k
\end{array}
\right]  _{q}}{2^m}.\frac{[m-k]_q!}{[m]_q![n+m-k]_q!\ldots [n+1]_q!}\Big\langle \sum_{i=0}^{m}\left[
\begin{array}
[c]{c}%
m\\
i
\end{array}
\right]_{q}{(e_q(t))}^m|G_{n+m-k,q}(x)\Big\rangle\\
=\frac{\left[
\begin{array}
[c]{c}%
m\\
k
\end{array}
\right]_{q}}{2^m[m]_q!\left[
\begin{array}
[c]{c}%
n+m-k\\
m-k
\end{array}
\right]_{q}}\Big\langle\sum_{i=0}^{m}\left[
\begin{array}
[c]{c}%
m\\
i
\end{array}
\right]_{q}{(e_q(t))}^m|G_{n+m-k,q}(x)\Big\rangle.
\end{align*}
Applying relation (\ref{IG}) to $G_{n+m-k,q}(x)$, we may continue as
\begin{align*}
c_{k,q}=\frac{\left[
\begin{array}
[c]{c}%
m\\
k
\end{array}
\right]_{q}}{2^m[m]_q!\left[
\begin{array}
[c]{c}%
n+m-k\\
m-k
\end{array}
\right]_{q}}\Big\langle\sum_{i=0}^{m}\left[
\begin{array}
[c]{c}%
m\\
i
\end{array}
\right]_{q}{(e_q(t))}^m|\sum_{l=0}^{n+m-k}\left[
\begin{array}
[c]{c}%
n+m-k\\
l
\end{array}
\right]_{q}G_{n+m-k-l,q}x^l\Big\rangle.
\end{align*}
Using Proposition (\ref{P5}) and considering Remark (\ref{r1}), we obtain
\begin{equation}\label{I1}
c_{k,q} =\frac{\left[
\begin{array}
[c]{c}%
m\\
k
\end{array}
\right]_{q}}{2^m[m]_q!\left[
\begin{array}
[c]{c}%
n+m-k\\
m-k
\end{array}
\right]_{q}}\sum_{i=0}^{m}\left[
\begin{array}
[c]{c}%
m\\
i
\end{array}
\right]_{q}\sum_{l=0}^{n+m-k}\sum\limits_{l_1+l_2+\ldots+l_i=l}
\left[
\begin{array}
[c]{c}%
l\\
l_1,l_2,\ldots,l_i
\end{array}
\right]_{q}\left[
\begin{array}
[c]{c}%
n+m-k\\
l
\end{array}
\right]_{q}G_{n+m-k-l,q}.
\end{equation}
Now, assume that $k\geq m$. Then starting from the relation (\ref{km}), we have
\begin{flalign*}
c_{k,q}=\frac{1}{[k]_q!}\langle( \frac{e_q(t)+1}{2t})^m t^k|G_{n,q}(x)\rangle \\
=\frac{1}{2^m[k]_q!}\langle(e_q(t)+1)^m t^{k-m}|G_{n,q}(x)\rangle
=\frac{1}{2^m[k]_q!}\langle (e_q(t)+1)^m |t^{k-m}G_{n,q}(x)\rangle \\
=\frac{1}{2^m[k]_q!}.\frac{1}{[n+m-k]_q!\ldots [n+1]_q!}\Big\langle(e_q(t)+1)^m \big|G_{n-k+m,q}(x) \Big\rangle\\
=\frac{1}{2^m[k]_q!}.\frac{[n]_q![k-m]_q!}{[n-k-m]_q![k-m]_q!}\Big\langle(e_q(t)+1)^m \big|G_{n-k+m,q}(x) \Big\rangle\\
=\frac{[k-m]_q!}{2^m[k]_q!}\left[
\begin{array}
[c]{c}%
n\\
k-m
\end{array}
\right]_{q}\sum_{i=0}^{m}\Big\langle(e_q(t)+1)^i \big|G_{n-k+m,q}(x) \Big\rangle.
\end{flalign*}
Finally, we obtain
\begin{equation}\label{I2}
c_{k,q}=\frac{\left[
\begin{array}
[c]{c}%
n\\
k-m
\end{array}
\right]_{q}}{2^m[k]_q!\left[
\begin{array}
[c]{c}%
k\\
m
\end{array}
\right]_{q}}\sum_{i=0}^{m}\sum_{l=0}^{n-k+m}\sum\limits_{l_1+l_2+\ldots+l_i=l}
\left[
\begin{array}
[c]{c}%
l\\
l_1,l_2,\ldots,l_i
\end{array}
\right]_{q}\left[
\begin{array}
[c]{c}%
n+m-k\\
l
\end{array}
\right]_{q}G_{n-k+m-l,q}.
\end{equation}
Replacing identities (\ref{I1}) and (\ref{I2}) in the assumed sum in (\ref{I0}), completes the proof.
\end{proof}
\begin{remark}
According to the proof of Theorem (\ref{THG2}), for any $n\in\mathbb{N}_{0}$ and any $m\in \mathbb{N}$, the $n$-th $q$-Appell polynomial, $A_{n,q}(x)$, can be expressed based on the following relation
\begin{equation*}
\begin{split}
A_{n,q}(x) & =\sum_{k=0}^{m-1}\frac{\left[
\begin{array}
[c]{c}%
m\\
k
\end{array}
\right]_{q}}{2^m[m]_q!\left[
\begin{array}
[c]{c}%
n+m-k\\
m-k
\end{array}
\right]_{q}}\times \\
& \Bigg\{\sum_{i=0}^{m}\left[
\begin{array}
[c]{c}%
m\\
i
\end{array}
\right]_{q}\sum_{l=0}^{n+m-k}\sum\limits_{l_1+l_2+\ldots+l_i=l}
\left[
\begin{array}
[c]{c}%
l\\
l_1,l_2,\ldots,l_i
\end{array}
\right]_{q}\left[
\begin{array}
[c]{c}%
n+m-k\\
l
\end{array}
\right]_{q}A_{n+m-k-l,q}\Bigg\}G^{[m]}_{k,q}(x)\end{split}
\end{equation*}
\begin{equation*}
\begin{split}
 & + \sum_{k=m}^{n}\frac{\left[
\begin{array}
[c]{c}%
n\\
k-m
\end{array}
\right]_{q}}{2^m[k]_q!\left[
\begin{array}
[c]{c}%
k\\
m
\end{array}
\right]_{q}}\times\\ &
\Bigg\{\sum_{i=0}^{m}\sum_{l=0}^{n-k+m}\sum\limits_{l_1+l_2+\ldots+l_i=l}
\left[
\begin{array}
[c]{c}%
l\\
l_1,l_2,\ldots,l_i
\end{array}
\right]_{q}\left[
\begin{array}
[c]{c}%
n+m-k\\
l
\end{array}
\right]_{q}A_{n-k+m-l,q}\Bigg\}G^{[m]}_{k,q}(x)
\end{split}
\end{equation*}
\end{remark}

\end{document}